\newtheorem{thm}{Theorem}[section]
\newtheorem{lemma}[thm]{Lemma} \newtheorem{cor}[thm]{Corollary}
\newtheorem{prop}[thm]{Proposition}
\theoremstyle{definition}
\newtheorem{defn}[thm]{Definition}
\newtheorem{ques}[thm]{Question}
\newtheorem{example}[thm]{Example}
\newtheorem{rmk}[thm]{Remark}
\newcommand{\Z}{\mathbb Z}
\newcommand{\F}{\mathbb F}
\newcommand{\Fq}{\mathbb{F}_q}
\DeclareMathOperator{\PGL}{PGL}
\newcommand{\mb}[1]{\mathbb{#1}}
\newcommand{\mc}[1]{\mathcal{#1}}
\begin{document}
\title{An algorithm for counting arcs in higher-dimensional projective space}
\author{Kelly Isham}
\date{}
\address{Department of Mathematics\\ 214 McGregory Hall\\
	Colgate University\\
Hamilton, NY}
\email{kisham@colgate.edu}
\maketitle

\begin{abstract}
	An $n$-arc in $(k-1)$-dimensional projective space is a set of $n$ points so that no $k$ lie on a hyperplane. In 1988, Glynn gave a formula to count $n$-arcs in the projective plane in terms of a relatively small number of combinatorial objects called superfigurations. Several authors have used this formula to count $n$-arcs in the projective plane for $n \le 10$. In this paper, we determine a formula to count $n$-arcs in projective 3-space. We then use this formula to give exact expressions for the number of $n$-arcs in $\mb{P}^3(\F_q)$ for $n \le 7$, which are polynomial in $q$ for $n \le 6$ and quasipolynomial in $q$ for $n=7$. Lastly, we generalize to higher-dimensional projective space.
\end{abstract}

\section{Introduction}\label{intro_sec}
Let $k \le n$. An $n$-arc in $(k-1)$-dimensional projective space is a set of $n$ points so that no $k$ lie on a hyperplane. Specializing to $k=3$, an $n$-arc in the projective plane is a set of $n$ points so that no 3 lie on a line. 

Arcs in $\mb{P}^2(\Fq)$ are closely related to several objects of interest. First, we can identify an $n$-arc with a $k\times n$ generator matrix with entries in $\Fq$ whose columns are given by some choice of affine representative for each point in the $n$-arc. Since no $k$ of these points lie on a hyperplane, no $k \times k$ minor of the generator matrix vanishes. By this association, $n$-arcs are also related to `maximum distance separable' (MDS) codes, which are linear codes for which the Singleton bound is achieved. Finally, an $n$-arc can be identified with an $\F_q$-point on the open subset of the Grassmannian $G(k,n)$ for which all Pl\"ucker coordinates are nonzero. See \cite{hirschfeldb, hirschfelda} for more on these connections. Significantly, any information about one of these objects immediately gives new results about the others. 

In 1955, Segre \cite{segre} highlighted three questions about arcs, including a question about determining the largest size of an arc in $(k-1)$-dimensional projective space. In a projective plane of order $q$, the answer is known -- if $q$ is odd, the largest size of an arc in $q+1$ and if $q$ is even, the largest size is $q+2$. When $k > 3$ and $q\ge k$, the MDS Conjecture -- a famous conjecture in coding theory -- states that the largest size of an arc should be $q+1$. Researchers have been making progress on this problem; see \cite{balllavrauw} for a recent survey on large arcs.

In this paper, we will discuss a counting version of Segre's question. Let $C_{n,k}(q)$ denote the number of ordered $n$-arcs in $\mb{P}^{k-1}(\Fq)$.  A major question about arcs is the following.

\begin{ques}
	For fixed $n$ and $k$, what is $C_{n,k}(q)$ as a function of $q$?
\end{ques} 

Let $M_{n,k}(q)$ denote the number of $[n,k]$ MDS codes over $\F_q$. Let $U_{n,k}(q)$ denote the open stratum of the Grassmannian $G(k,n)$ over $\F_q$ for which all Pl\"ucker coordinates are nonzero. It is known that $M_{n,k}(q) = \#U_{n,k}(q)$, see e.g. \cite{hirschfeldthas} for more details. The following proposition is an easy generalization of \cite[Lemma 2]{iss}.
\begin{prop}\label{mds_arc}
	Fix positive $k,n \in \Z$. Then
	$$
	M_{n,k}(q)=\#U_{n,k}(q) = \frac{(q-1)^n}{|\PGL_k(\F_q)|} C_{n,k}(q).
	$$
\end{prop}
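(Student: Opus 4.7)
The plan is to double-count the set $\mathcal{M}$ of $k \times n$ matrices over $\F_q$ for which every $k \times k$ minor is nonzero, by giving two different bijective descriptions of this set.

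On the one hand, such a matrix encodes exactly an ordered $n$-arc in $\mb{P}^{k-1}(\F_q)$ together with a choice of nonzero affine representative in $\F_q^k$ for each of the arc's $n$ points: the columns of the matrix, once projectivized, give the ordered tuple of points, and the condition that all $k \times k$ minors are nonzero translates verbatim into the condition that no $k$ of those points lie on a hyperplane. Since each point of $\mb{P}^{k-1}(\F_q)$ has exactly $q-1$ nonzero affine representatives, this description yields $|\mathcal{M}| = (q-1)^n \cdot C_{n,k}(q)$.

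On the other hand, a matrix in $\mathcal{M}$ is the same data as a generator matrix for the $k$-dimensional subspace of $\F_q^n$ spanned by its rows; two matrices give the same subspace exactly when they differ by left multiplication by an element of $\mathrm{GL}_k(\F_q)$, and the minor condition is equivalent to all Pl\"ucker coordinates of that subspace being nonzero, i.e., to the subspace representing a point of $U_{n,k}(\F_q)$. Hence $|\mathcal{M}| = |\mathrm{GL}_k(\F_q)| \cdot \#U_{n,k}(\F_q)$.

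Equating the two expressions, invoking the relation between $|\mathrm{GL}_k(\F_q)|$ and $|\PGL_k(\F_q)|$ coming from the quotient by the center of scalar matrices, and using the already-cited identification $M_{n,k}(q) = \#U_{n,k}(\F_q)$ then produces the stated formula. The only delicate point is to distinguish carefully between the two group actions in play---$\mathrm{GL}_k(\F_q)$ acting on matrices by row operations, versus $\PGL_k(\F_q)$ acting projectively on arcs---so that the $(q-1)^n$ factor from affine lifts and the center contribution are each accounted for exactly once.
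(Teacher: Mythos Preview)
The paper does not supply its own proof of this proposition; it simply cites it as an easy generalization of \cite[Lemma 2]{iss}. Your double-counting of the set $\mathcal{M}$ of $k\times n$ matrices with no vanishing maximal minor is exactly the standard argument behind that lemma, so the approach is correct and there is nothing substantively different to compare.

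There is, however, a genuine slip in your final assembly step. Your two counts give
\[
(q-1)^n\,C_{n,k}(q)\;=\;|\mathcal{M}|\;=\;|\mathrm{GL}_k(\F_q)|\cdot \#U_{n,k}(\F_q),
\]
and since $|\mathrm{GL}_k(\F_q)|=(q-1)\,|\PGL_k(\F_q)|$ this produces
\[
\#U_{n,k}(\F_q)\;=\;\frac{(q-1)^{\,n-1}}{|\PGL_k(\F_q)|}\,C_{n,k}(q),
\]
with exponent $n-1$, not $n$. A sanity check confirms this: for $n=k+1$ one has $C_{k+1,k}(q)=|\PGL_k(\F_q)|$ (projective frames) while $\#U_{k+1,k}(\F_q)=(q-1)^k$ (hyperplanes with all coefficients nonzero), which matches $(q-1)^{n-1}$ but not $(q-1)^n$. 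So the formula as printed in the paper carries an extra factor of $q-1$; your argument is right, but it does not prove the statement as written. Your closing sentence flags exactly the place where this goes wrong---the interaction between the $(q-1)^n$ from affine lifts and the $(q-1)$ from the center of $\mathrm{GL}_k$---but you then assert that the ``stated formula'' follows, which it does not. You should carry out that last step explicitly and note the discrepancy with the exponent in the proposition.
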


This proposition demonstrates the connection between arcs, MDS codes, and rational points on the Grassmannian. Any statement about $C_{n,k}(q)$ in this paper can be converted to a statement about $M_{n,k}(q)$ or $\#U_{k,n}(q)$ using Proposition \ref{mds_arc}. 

\subsection{Arcs in the projective plane}
When $k \ge 4$, there is a unique projective space of order $q$ up to isomorphism, namely $\mb{P}^{k-1}(\F_q)$. However, when $k=3$, there can be several non-isomorphic projective planes of order $q$. In this setting, we use the notation $C_{n}(\Pi)$ where $\Pi$ is some projective plane of order $q$. In \cite{glynn}, Glynn produces an algorithm to count the number of $n$-arcs in any projective plane of order $q$ in terms of simpler combinatorial objects. This algorithm has been used to determine exact formulas for $C_{n,3}(\Pi)$ when $n \leq 9$. Glynn finds that $C_{n,3}(\Pi)$ is polynomial in $q$ when $n\leq 6$.

A function $f$ is \emph{quasipolynomial} if there exists finitely many polynomials $g_0, \ldots, g_{N-1}$ so that $f= g_i$ whenever $q \equiv i \pmod{N}$. The function $C_{n,3}(\Pi)$ is quasipolynomial when $n \in \{7,8,9\}$ \cite{glynn, iss, kklpw}. Iampolskaia, Skorobogatov, and Sorokin \cite{iss} count [9,3] MDS codes and derive their formula for $C_{9,3}(\mb{P}^2(\Fq))$ as a corollary. Kaplan, Kimport, Lawrence, Peilen, and Weinreich \cite{kklpw} extend their work to count $9$-arcs in any projective plane of order $q$.

\begin{defn}\cite{bb}
	A \emph{linear space} $(\mc{P}, \mc{L})$ is a pair of sets where $\mc{P}$ denotes a set of points and $\mc{L}$ denotes a set of lines that satisfies the following properties:
	\begin{enumerate}
		\item Every line in $\mc{L}$ is a subset of $\mc{P}$.
		\item Any two distinct points belong to exactly one line in $\mc{L}$.
		\item Every line in $\mc{L}$ contains at least 2 points.
	\end{enumerate}
\end{defn}

\begin{defn}
	Two linear spaces $f= (\mc{P}_1, \mc{L}_1)$ and $g = (\mc{P}_2, \mc{L}_2)$ are isomorphic if there exists a bijection $\mc{P}_1 \rightarrow \mc{P}_2$ that preserves lines.
\end{defn}

Since every two points determine a line, we call a line containing at least three points a \emph{full line}. A planar space $(\mc{P}, \mc{L})$ is uniquely determined by $\mc{P}$ and the set of full lines. Thus we can refer to a linear space by its set of points and full lines only. From now on, we only consider full lines and we will drop the word ``full."

\begin{defn}\cite{kklpw}
	A \emph{superfiguration} in the projective plane is a linear space so that every line contains at least 3 points and every point lies on at least 3 lines.
\end{defn}

We give an alternate definition of superfigurations which will be useful later on. Let the \emph{index} of a point be the number of (full) lines through that point. A \emph{superfiguration} in the projective plane is a linear space so that the index of every point is at least 3.

These superfigurations, which are highly symmetric and contain many lines, are important objects in classical projective geometry. The Fano plane is the unique superfiguration on $7$ points, and the M\"obius-Kantor configuration is the unique superfiguration on 8 points. The Hesse superfiguration is one of ten superfigurations on 9 points. It contains 9 points and 12 lines and can be realized by the 9 inflection points of a complex smooth cubic curve.

Let $\Pi$ be a projective plane of order $q$. A \emph{strong realization} in $\Pi$ of a superfiguration $s$ is an embedding of the points $\mc{P}$ into $\Pi$ so that no extra collinearities are formed. We let $A_s(\Pi)$ denote the number of strong realizations of $s$ in $\Pi$. We now state Glynn's Theorem for the number of $n$-arcs in the projective plane.

\begin{thm}\cite{glynn}\label{glynn_thm}
	There exist polynomials $p(q)$ and $p_s(q)$ such that for any projective plane $\Pi$ of order $q$, 
	$$
	C_n(\Pi) = p(q) + \sum_{s} p_s(q) A_s(\Pi)
	$$
	where the summation is taken over all superfigurations $s$ on at most $n$ points.
\end{thm}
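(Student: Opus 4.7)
The plan is to prove Theorem \ref{glynn_thm} by induction on $n$, establishing the stronger statement that for every linear space $L$ on $[n]$ there exist polynomials $p_L(q)$ and $p_{L,s}(q)$ (indexed by superfigurations $s$ on at most $n$ points) with
\[
A_L(\Pi) = p_L(q) + \sum_{s} p_{L,s}(q)\, A_s(\Pi).
\]
The theorem is the special case in which $L$ is the trivial linear space on $[n]$ with no full lines, since then $A_L(\Pi) = C_n(\Pi)$. The base cases $n \le 2$ are immediate because there are no triples and no superfigurations.

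I begin with an inclusion--exclusion expansion. Writing $C_n(\Pi)$ as a sum over ordered $n$-tuples of distinct points in $\Pi$ of the product $\prod_{\{i,j,k\}}(1 - \chi_{ijk})$, where $\chi_{ijk}$ is the indicator that $P_i, P_j, P_k$ are collinear, and then expanding and grouping the sign-weighted subsets of triples by the linear space they generate (using the fact that two triples in a projective plane sharing two points force collinearity of their union), I obtain
\[
C_n(\Pi) = \sum_L \alpha_L\, W_L(\Pi),
\]
where $L$ ranges over linear spaces on $[n]$, $W_L(\Pi)$ counts ordered $n$-tuples weakly realizing $L$ (at least the required collinearities hold), and $\alpha_L$ is an explicit integer that factors over the lines of $L$ and depends only on $L$.

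The main work is to show that $W_L(\Pi)$ itself has the form $p_L(q) + \sum_s p_{L,s}(q) A_s(\Pi)$ when $L$ is not a superfiguration. To this end, I peel off a point $p$ of index at most $2$ in $L$, let $L' = L \setminus \{p\}$, and express $W_L(\Pi)$ as a sum over placements of $P_p$ compatible with the lines of $L$ through $p$: if $p$ has index $0,1,$ or $2$, there are respectively roughly $q^2 + q + 1$, $q+1$, or at most $1$ positions available, each a polynomial in $q$ that depends only on $\Pi$ through its order. A secondary inclusion--exclusion subtracts placements in which $P_p$ creates an accidental collinearity with two previously placed points; each corrective term is itself $W_{L''}(\Pi)$ for a refinement $L''$ of $L$ with strictly more lines through $p$. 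Invoking the inductive hypothesis on $L'$ (on $n-1$ points) and on the refinements $L''$ (by a nested induction on the number of lines, which terminates either at a non-realizable configuration or at a configuration whose low-index points have all been absorbed into a superfiguration), one obtains the desired decomposition for $W_L$, and hence for $C_n(\Pi)$ after substituting into $\sum_L \alpha_L W_L(\Pi)$.

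The main obstacle is the bookkeeping of the nested inclusion--exclusion: the polynomial coefficients $p_L(q)$ and $p_{L,s}(q)$ must be manifestly independent of $\Pi$ beyond its order. This hinges on the structural observation that placing a point of index at most $2$ requires only the line counts $|\Pi| = q^2 + q + 1$ and $|\text{line}| = q + 1$, which are the same across all projective planes of order $q$; the only plane-specific behavior comes from the ``irreducible'' configurations in which every point has index at least $3$, and these are precisely the superfigurations captured by the $A_s(\Pi)$ terms.
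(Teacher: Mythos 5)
Your proposal is correct and follows essentially the same strategy as the paper's method (the proof of Lemma \ref{mainlemma} and Theorem \ref{p3count} is exactly this argument one dimension up): Möbius inversion between weak and strong realizations over the refinement order on linear spaces, peeling off a point of index at most $2$ so that the number of admissible placements is $q^2+q+1-(n-1)$, $q+1-\#(P_g\cap L_g)$, or $0/1$ depending only on $q$ and combinatorial data of $g$, and a double induction on $n$ and on refinements that terminates precisely at the superfigurations. No gaps worth flagging.
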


\begin{rmk}\label{glynn_rmk}
	Consider counting ordered $n$-arcs in $\mb{P}^{k-1}(\Fq)$ by counting $k \times n$ generator matrices with the property that no $k \times k$ minor vanishes. By the Inclusion-Exclusion Principle, we could determine the number of such matrices by counting $k \times n$ matrices for which at least one maximal minor vanishes. Fix an ordering on the $\binom{n}{k}$ maximal minors. For each $(i_1, \ldots, i_{\binom{n}{k}}) \in \{0,1\}^{\binom{n}{k}}$, we must determine the number of $k \times n$ matrices with entries in $\Fq$ for which minor $M_{i_j}$ vanishes if $i_j=0$ and does not vanish if $i_j =1$. There are $2^{\binom{n}{k}}-1$ such patterns of minors to consider. 
	Theorem \ref{glynn_thm} is important because it reduces the number of objects to consider significantly. While an exact formula for the number of superfigurations on at most $n$ points is not known,  there are far fewer than $2^{\binom{n}{3}}-1$ of them. For example when $n=7$, there are $2^{35}-1$ patterns of minors to consider, yet only one superfiguration $s$ on 7 points up to isomorphism. There are 168 superfigurations in the isomorphism class of $s$. Table \ref{superfig_table} gives the number of linear spaces and superfigurations up to isomorphism for $7 \le n \le 12$.
	
	\begin{table}[!htp] \caption{Number of linear spaces and superfigurations on $n$ points up to isomorphism\cite{betten, kklpw}}\label{superfig_table}
		\begin{tabular}{c|cccccc}
			\hline
			$n$ &7&8&9&10&11&12\\
			\hline
			Linear spaces & 24&69&384&5,250&232,929&28,872,973\\
			\hline
			Superfigurations&1&1&10&151&16,234&$>179,000$\\
			\hline
		\end{tabular}
	\end{table}
\end{rmk}

The summation in Theorem \ref{glynn_thm} is over all superfigurations on at most $n$ points. However, since $A_s(\Pi) = A_t(\Pi)$ whenever $s$ is isomorphic to $t$, we can modify this theorem to sum over all isomorphism classes of superfigurations on at most $n$ points instead. 

In forthcoming joint work, we modify Glynn's formula to make computations simpler and we use this modified algorithm to show that the number of 10-arcs in $\mb{P}^2(\mb{F}_q)$ is a nonquasipolynomial function in $q$. While no explicit 10-arc formula is given, we show that the formula depends on the Fourier coefficients of certain modular forms which have models that are elliptic curves or singular K3 surfaces. We then conjecture that the number of $n$-arcs will continue to be nonquasipolynomial for larger $n$, as the number of $n$-arcs in the projective plane should follow Mn\"ev's Universality Theorem \cite{mnev}. However, we cannot prove this conjecture without explicitly determining all pieces that appear in Theorem \ref{glynn_thm}, which becomes computationally infeasible when $n > 10$.  The common obstruction to proving these types of theorems is that we cannot guarantee `bad' pieces do not cancel out. For examples of this obstruction occurring in other problems, see \cite{paksoffer,vaughanlee}.

\subsection{Arcs in Projective 3-Space}
Based on the difficulty of computation for 10-arcs, it seems infeasible to count the number of 11-arcs in $\mb{P}^2(\mb{F}_q)$. Further, combining the results from \cite{glynn, iss, kklpw} and the forthcoming work on $C_{10,3}(q)$ gives the transitions from polynomial to quasipolynomial to nonquasipolynomial. Instead, we take a new direction in the study of $n$-arcs. In this paper, we generalize Glynn's formula by producing an algorithm to count the number of $n$-arcs in $\mb{P}^3(\mb{F}_q)$. We also outline how to adapt these ideas to count $n$-arcs in $\mb{P}^{k-1}(\Fq)$ where $k > 4$. We begin by setting up the terminology that we will need later on.

In 2-dimensional space, the basic geometric objects are points and lines. In 3-dimensional space, we must consider points, lines, and planes.

\begin{defn}
A \emph{planar space} is a triple of sets $(\mc{P}, \mc{L}, \mc{H})$ where $\mc{P}$ is the set of points, $\mc{L}$ is the set of lines, and $\mc{H}$ is the set of planes such that 
\begin{enumerate}
	\item $\mc{L}, \mc{H} \subseteq 2^{\mc{P}}$
	\item $(\mc{P}, \mc{L})$ is a linear space
	\item Any three distinct non-collinear points lie on a unique plane.
\end{enumerate}
\end{defn}

We use the notation $\mc{H}$ to represent planes since planes in 3-dimensional projective space are the same as hyperplanes. Planar spaces are very general spaces. For example, for all $k \ge 4$, the $(k-1)$-dimensional projective and affine spaces are planar spaces.

Two planar spaces $(\mc{P}_1, \mc{L}_1, \mc{H}_1)$ and $(\mc{P}_2, \mc{L}_2, \mc{H}_2)$ are isomorphic if there exists a bijection $\mc{P}_1 \rightarrow \mc{P}_2$ that preserves lines and planes.

\begin{rmk}
	The number of planar spaces on $n$ points is equal to the number of non-isomorphic simple matroids on a set of $n$ points with rank at most 4. Adding columns from Table 4 in \cite{mmib} leads to Table \ref{psf_count}.

	\begin{table}[!htp]\caption{Number of planar spaces on $n$ points up to isomorphism \cite{mmib}}\label{psf_count}
			\begin{center}
		\begin{tabular}{c|ccccccccc}
			\hline
		$n$ & 2&3&4&5&6&7&8&9&10\\
		\hline
		Planar spaces & 1&2&4&8&21&73&686&186,365&4,884,579,115\\	
		\hline	
		\end{tabular}
	\end{center}
	\end{table}

\end{rmk}

\begin{defn}
For a planar space $f = (\mc{P}, \mc{L}, \mc{H})$, a strong realization of $f$ in $\mb{P}^3(\F_q)$ is an injective mapping $\sigma:\mc{P} \rightarrow \mb{P}^3(\F_q)$ such that each subset $Q$ of $\mc{P}$
\begin{enumerate} \item is contained in a line of $f$ if and only if $\sigma(Q)$ is contained in a line of $\mb{P}^3(\F_q)$ and 
	\item is contained in a plane of $f$ if and only if $\sigma(Q)$ is contained in a plane of $\mb{P}^3(\F_q)$. 
	\end{enumerate}
	For any planar space $f$, let $A_f(4,q)$ be the number of strong realizations of $f$. 
\end{defn}

We use the notation $A_f(4,q)$ to avoid confusion with the notation for the number of strong realizations in the projective plane given in \cite{kklpw}. The use of the numeral 4 indicates that we are considering embeddings of points into $\mb{P}^3(\F_q)$.

A \emph{full line} of $f$ is a line containing at least 3 points and a \emph{full plane} of $f$ is a plane containing at least 4 points. 

\begin{rmk}
	From now on, we use the terms line and plane to mean full line and full plane respectively. Abusing notation, in our examples we will only write down the full lines in $\mc{L}$ and full planes in $\mc{H}$. For example, we can define a planar space on four points with lines given by the set $\{\{1,2,3\}, \{1,4\}, \{2,4\},\{3,4\}\}$ and planes given by $\{\{1,2,3,4\}\}$. However, we would simply write $\mc{L} = \{\{1,2,3\}\}$ and $\mc{H} = \{\{1,2,3,4\}\}$ as a planar space is uniquely determined by its full lines and full planes.
	\end{rmk}

A point has \emph{index} $(i,j)$ if it lies on exactly $i$ (full) planes and $j$ (full) lines.
\begin{defn}
	\label{hyperfig_def}
A \emph{hyperfiguration} is a planar space on $n$ points such that for every point $P$, the index $(i,j)$ of $P$ satisfies at least one of the following:
\begin{enumerate}
	\item $i \ge 4$
	\item $j \ge 3$
	\item $(i,j) = (3,0)$.
\end{enumerate}
\end{defn}

This definition is a bit surprising as it does not appear to be the direct generalization of a superfiguration. In fact, omitting conditions (2) and (3) gives the most direct generalization of a superfiguration, namely that every plane contains at least four points and every point lies on at least four planes. Taking conditions 1 and 2 together allows for subplanes of a planar space to contain isomorphic copies of superfigurations. Thus condition (2) makes sense to include as superfigurations were special objects in projective planes, so they should often be considered special objects in projective 3-space. We call index $(3,0)$ a \emph{surprising index} since it is not obvious why we must allow this case in the definition of hyperfiguration. This will be made clear in Section \ref{gen_sec}.

Skorobogatov \cite{skorobogatov_matroid} studies similar formulas for the number of representations of a matroid over $\Fq$. His formula is in terms of a summation over matroids that are \emph{special} and \emph{co-special}; see \cite{skorobogatov_matroid} for these definitions. He also gives a necessary criterion for a matroid to be special. It is likely that Definition \ref{hyperfig_def} exactly classifies the matroids of rank at most 4 that are both special and co-special.
\subsection{Main Results}
Theorem \ref{glynn_thm} gives the count for $n$-arcs in the projective plane in terms of realizations of superfigurations, which informally are combinatorial objects that contain many lines. In this paper, we generalize Theorem \ref{glynn_thm} to 3-dimensional projective space. We do so by showing that $C_{n,4}(q)$ can be expressed in terms of a linear combination of the number of strong realizations for hyperfigurations, which are combinatorial objects that contain either many lines or many planes.

\begin{thm}\label{p3count}
	There exist polynomials $p(q)$ and $p_h(q)$ in $\Z[q]$ such that
	$$
	C_{n,4}(q) = p(q) + \sum_{h} p_h(q) A_h(4,q)
	$$
	where the summation runs over all isomorphism classes $h$ of hyperfigurations on at most $n$ points. Moreover, there is an algorithm that produces $p(q)$ and $p_h(q)$ for each isomorphism class $h$.
\end{thm}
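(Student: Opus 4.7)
The plan is to mirror the proof of Theorem \ref{glynn_thm} in the 3-dimensional setting, replacing linear spaces with planar spaces and superfigurations with hyperfigurations. For a planar space $f$ on a labeled set $\mc{P}$ of $n$ points, let $B_f(4,q)$ count ordered injections $\sigma:\mc{P}\to\mb{P}^3(\F_q)$ such that every triple forced collinear by $f$ is collinear and every quadruple forced coplanar by $f$ is coplanar (with additional incidences allowed). Sorting weak realizations by the planar space they actually induce gives $B_f(4,q)=\sum_{g\ge f}A_g(4,q)$, where $g$ runs over labeled planar spaces on $\mc{P}$ refining $f$. Taking $f$ to be the trivial planar space $\mathrm{triv}$, whose strong realizations are exactly ordered $n$-arcs, produces the starting identity
$$C_{n,4}(q)\;=\;A_{\mathrm{triv}}(4,q)\;=\;B_{\mathrm{triv}}(4,q)-\sum_{g\,\ne\,\mathrm{triv}}A_g(4,q),$$
in which $B_{\mathrm{triv}}(4,q)=\prod_{i=0}^{n-1}(|\mb{P}^3(\F_q)|-i)\in\mb{Z}[q]$ and $g$ ranges over all nontrivial labeled planar spaces on $\mc{P}$.

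The core of the argument is a reduction lemma: if $f$ contains a point $P$ whose index $(i,j)$ fails every condition in Definition \ref{hyperfig_def}, i.e.\ $i\le 3$, $j\le 2$ and $(i,j)\ne(3,0)$, then $A_f(4,q)$ can be written as a $\mb{Z}[q]$-linear combination of $A_{f'}(4,q)$'s, where each $f'$ is either a planar space on $\mc{P}\setminus\{P\}$ or a planar space on $\mc{P}$ in which the index of $P$ is strictly richer than $(i,j)$ and thus closer to satisfying a hyperfiguration condition. To establish it I would fix a strong realization of the deletion $f\setminus\{P\}$ and compute, as an explicit polynomial in $q$, the number of admissible positions for $P$ assuming only that $P$ lies on the $i$ planes and $j$ lines of $f$ through it; each of the eleven reducible indices leads to a well-behaved generic intersection in $\mb{P}^3$ (a plane, a line, a single point, etc.) with polynomial count. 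Positions of $P$ that accidentally create further collinearities or coplanarities are then peeled off by inclusion-exclusion, producing correction terms $A_{f'}(4,q)$ indexed by planar spaces with extra lines or planes through $P$. The surprising index $(3,0)$ is precisely the case in which three planes through $P$ meet in a single point whose location is determined by the planes themselves rather than by a polynomial-in-$q$ count of free positions, so no such reduction is available.

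Given the lemma, the theorem follows by iterating the reduction together with induction on the lexicographically ordered pair $(n,\text{number of reducible points in }f)$: each application either reduces to a planar space on $n-1$ points or promotes a vertex toward a hyperfiguration condition, strictly decreasing the measure. Residual polynomial contributions (including the initial $B_{\mathrm{triv}}$) collect into $p(q)\in\mb{Z}[q]$, and the surviving $A$-terms correspond to hyperfigurations. Because $A_f(4,q)$ depends only on the isomorphism class of $f$, grouping labeled hyperfigurations by isomorphism class yields the stated form $\sum_h p_h(q)\,A_h(4,q)$ with $p_h(q)\in\mb{Z}[q]$. The algorithmic claim is the observation that every step in this rewriting procedure is explicit and the list of planar spaces on $\le n$ points is finite.

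The principal obstacle will be the reduction lemma itself. One must perform an explicit case analysis on the eleven reducible indices $(i,j)$, carefully accounting for the intersection geometry in $\mb{P}^3$ (skew versus meeting lines, a line contained in versus transverse to a plane, concurrent versus generic planes), and ensure that every degeneracy is captured by a combinatorial extra incidence that the inductive framework can absorb. Verifying rigorously that $(3,0)$ is the unique irreducible low-index case — and hence that Definition \ref{hyperfig_def} is exactly right — is the most delicate step and motivates the ``surprising index'' clause highlighted in the excerpt.
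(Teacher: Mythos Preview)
Your proposal is correct and follows essentially the same route as the paper: the key reduction is exactly Lemma~\ref{mainlemma}, which carries out the case analysis over the reducible indices you describe (and shows that $(0,2)$ and $(2,2)$ are in fact impossible), and the inductive wrap-up is organized as you outline, though the paper phrases the reduction in terms of $B_f(4,q)$ rather than $A_f(4,q)$. One small caveat: your stated induction measure $(n,\text{number of reducible points in }f)$ need not strictly decrease when you pass to a correction term $g>f$ on the same point set, since enriching the incidences at $P$ can leave $P$ (and every other vertex) still reducible; the paper instead inducts on $n$ and, for fixed $n$, processes planar spaces downward in the finite partial order, which is what your argument actually needs.
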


We emphasize here that Theorem \ref{p3count} significantly reduces the number of objects to consider when compared to Inclusion-Exclusion. The data in Table \ref{psf_count} demonstrates that the number of planar spaces on $n$ points is significantly smaller than $2^{\binom{n}{4}}-1$. We give the number of hyperfigurations up to isomorphism for small $n$ in Table \ref{hyp_table}.
	
	\begin{table}[!htp]\caption{Number of hyperfigurations on $n$ points up to isomorphism}\label{hyp_table}
		\begin{center}
			\begin{tabular}{c|ccc}
				\hline
				$n$ &6 & 7 & 8\\
				\hline
				Hyperfigurations&1&6&235\\
				\hline
			\end{tabular}
		\end{center}
	\end{table}

It is also interesting to note that it is not obvious why the summation in Theorem \ref{p3count} is over hyperfigurations, as these are not a direct generalization of superfigurations. In Section \ref{gen_sec} we explain the subtleties that make hyperfigurations the right object to choose. Throughout this paper, we abuse notation and refer to an isomorphism class of a hyperfiguration as a hyperfiguration.

We then implement the algorithm given in the proof of Theorem \ref{p3count} in Sage \cite{sage} to express $C_{n,4}(q)$ for $4 \le n \le 7$. 

\begin{thm}\label{arcform} Let $a(q) = \begin{cases} 1& 2 \mid q \\ 0 & 2\nmid q\end{cases}.$ Then
	\begin{align*}
		C_{4,4}(q) &=  (q^2 + q + 1)(q^2 + 1)(q + 1)^2q^6\\[1em]
		C_{5,4}(q) &= (q^2 + q + 1)(q^2 + 1)(q + 1)^2(q - 1)^3q^6\\[1em]
		C_{6,4}(q) &= (q^2 + q + 1)(q^2 + 1)(q + 1)^2(q - 1)^3(q-2)(q-3)(q-4)q^6\\[1em]
		C_{7,4}(q) &= (q^2 + q + 1)(q^2 + 1)(q + 1)^2(q-1)^3q^6\bigg(q^6 - 28q^5 \\
		&+ 323q^4 - 1952q^3 + 6462q^2 - 11004q + 7470-30a(q)\bigg).
	\end{align*}
\end{thm}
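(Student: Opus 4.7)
The plan is to execute the algorithm of Theorem \ref{p3count} case by case for $n \in \{4,5,6,7\}$, using the Sage implementation mentioned after the theorem's statement. For each $n$ the algorithm produces the polynomial $p(q)$ and, for each isomorphism class $h$ of hyperfigurations on at most $n$ points, the polynomial $p_h(q)$; the strong realization counts $A_h(4,q)$ are computed separately by embedding the abstract combinatorial data into $\mb{P}^3(\F_q)$. These ingredients are then summed per Theorem \ref{p3count} and simplified.

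For $n=4$ and $n=5$, Table \ref{hyp_table} shows that there are no hyperfigurations on at most five points, so $C_{n,4}(q)=p(q)$ reduces to a direct general-position count. For $n=4$, choosing the points sequentially gives
\[
C_{4,4}(q)=(q^3+q^2+q+1)(q^3+q^2+q)(q^3+q^2)q^3,
\]
which rearranges to the displayed product. The case $n=5$ follows by an inclusion-exclusion on the four planes cut out by triples of the first four points (the faces of a tetrahedron), whose pairwise, triple, and quadruple intersections are known combinatorially; the count of admissible fifth points simplifies to $(q-1)^3$.

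For $n=6$, Table \ref{hyp_table} records a single hyperfiguration $h_6$. The steps are to identify $h_6$ concretely, compute $A_{h_6}(4,q)$, run the algorithm to extract $p(q)$ and $p_{h_6}(q)$, and verify that the sum collapses to the claimed polynomial. The observation that $C_{6,4}(q)/C_{5,4}(q)=(q-2)(q-3)(q-4)$ suggests that the contribution from $h_6$, even if individually non-polynomial, exactly cancels the non-polynomial remainders in $p(q)$ coming from the inclusion-exclusion over the $\binom{5}{3}=10$ planes spanned by triples of the first five points.

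The main obstacle is $n=7$, which accounts for the only quasipolynomial behavior. Table \ref{hyp_table} lists six hyperfigurations on seven points, and one of them must be the planar embedding of the Fano configuration (seven points confined to a single plane of $\mb{P}^3$, each lying on three full lines). By the classical result that the three diagonal points of a complete quadrangle are collinear if and only if the characteristic is two, such a configuration admits a strong realization in $\mb{P}^3(\F_q)$ precisely when $q$ is even, which is the source of the factor $a(q)$; a direct count of strong realizations in this case produces the coefficient $-30$ after multiplication by $p_h(q)$. The remaining five hyperfigurations contribute polynomially in $q$. The technical work divides into (i) enumerating the six hyperfigurations and their automorphism groups, (ii) computing each $A_h(4,q)$, including the characteristic-two case split for the Fano contribution, (iii) running the algorithm to produce $p(q)$ and the $p_h(q)$, and (iv) assembling and simplifying the sum. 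Step (ii) is the genuine mathematical obstacle because of the Fano case-split; the remainder is bookkeeping of a size that is reliably handled by the Sage implementation, and the common factor $(q^2+q+1)(q^2+1)(q+1)^2(q-1)^3q^6$ across $C_{4,4},\ldots,C_{7,4}$ serves as a useful consistency check.
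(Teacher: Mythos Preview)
Your overall plan---run the algorithm of Theorem~\ref{p3count}, compute each $A_h(4,q)$, and sum---matches the paper's. Your handling of $n\le 5$ is fine (the paper observes more directly that $C_{5,4}(q)=|\PGL_4(\F_q)|$). For $n=6$ you are slightly off: the unique hyperfiguration on six points is the pair of skew lines, and its realization count is a genuine polynomial, $A_6(4,q)=(q^2+q+1)(q^2+1)(q+1)^2(q-1)^2q^6$; since $p(q)\in\Z[q]$ by construction, there is no ``non-polynomial cancellation'' to worry about. The paper also gives an independent verification of $C_{6,4}(q)$ via twisted cubics and the Orbit--Stabilizer theorem.

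The real discrepancy is at $n=7$. You attribute the term $-30a(q)$ to the Fano-in-a-plane hyperfiguration (what the paper calls $h_6$), but in fact the algorithm assigns $h_6$ coefficient zero: Theorem~\ref{alg_thm7} expresses $C_{7,4}(q)$ as a polynomial plus a combination of $A_6,A_{h_1},A_{h_2},A_{h_3},A_{h_5}$ only. The quasipolynomial contribution comes instead from the two \emph{line-free} hyperfigurations $h_1$ and $h_2$ (the latter is the complement of the Fano plane in Glynn's sense). Via the generator-matrix method of Proposition~\ref{strong_real_alg} one finds $A_{h_1}(4,q)=(1-a(q))\,|\PGL_4(\F_q)|$ and $A_{h_2}(4,q)=a(q)\,|\PGL_4(\F_q)|$, and the coefficients $210$ and $180$ combine as $210(1-a(q))+180\,a(q)=210-30a(q)$. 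So your claim that ``the remaining five hyperfigurations contribute polynomially'' is false, and the mechanism you propose for the parity split (diagonal points of a quadrangle inside a single plane) is not the one that actually survives in the formula. The strategy is sound, but this misdiagnosis would send you computing the wrong realization counts carefully and the right ones carelessly.
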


When $n = 4,5,$ and 6, $C_{n,4}(q)$ can also be determined by counting methods. We will describe these in Section \ref{count_sec}. By using the duality between $[n,k]$ MDS codes and $[n,n-k]$ MDS codes, one can determine the number of 7-arcs in $\mb{P}^3(\Fq)$ from Glynn's \cite{glynn} formula for 7-arcs in $\mb{P}^2(\Fq)$. Our algorithm gives another way of producing $C_{n,4}(q)$ when $4 \le n \le 7$. Importantly, this algorithm still works for $n \ge 8$, meaning that it is now more feasible to compute the number of $n$-arcs in $\mb{P}^3(\Fq)$ for larger $n$.

\subsection{Outline}
In Section \ref{gen_sec}, we prove Theorem \ref{p3count} and give an algorithm for counting $n$-arcs in $\mb{P}^3(\Fq)$. In Section \ref{count_sec}, we use the algorithm from Section \ref{gen_sec} to determine $C_{n,4}(q)$ for $4\le n \le 7$. In Section \ref{higher_dim_sec}, we discuss an approach to generalizing hyperfigurations in higher-dimensional projective space. We then prove that a formula to compute $C_{n,k}(q)$ in terms of these realizations of these generalized hyperfigurations exists for all $k \ge 4$.

\section{Generalizing Glynn's Theorem for $n$-Arcs in $\mb{P}^3(\Fq)$}\label{gen_sec}
We can define a partial order on planar spaces on $n$ points as follows. Let $\mc{P} = \{1, 2, \ldots, n\}$. Suppose $f = (\mc{P}, \mc{L}_1 , \mc{H}_1)$ and $g  = (\mc{P}, \mc{L}_2, \mc{H}_2)$ are two planar spaces on $n$ points. Then $g \ge f$ if each line in $\mc{L}_1$ is contained in some line of $\mc{L}_2$ and each plane in $\mc{H}_1$ is contained in some plane of $\mc{H}_2$.

\begin{example}
	Let $f$ be the planar space on five points with $\mc{L}_1 = \{\{0,1,2\}\}$ and $\mc{H}_1 = \{\{0,1,2,3\}, \{0,1,2,4\}\}$. Let $g$ be the planar space on five points with $\mc{L}_2 = \{\{0,1,2,3\}\}$ and $\mc{H}_2 = \{\{0,1,2,3,4\}\}$. Then $g \ge f$.
	
	If we take $h$ to be the planar space on five points with $\mc{L}_3 = \emptyset$ and $\mc{H}_3 = \{\{0,1,2,3,4\}\}$, then $g \ge h$, but $h$ is not comparable to $f$.
\end{example}

\begin{defn}
	For a planar space $f = (\mc{P}, \mc{L}, \mc{H})$, a weak realization of $f$ in $\mb{P}^3(\F_q)$ is an injective mapping $\tau:\mc{P} \rightarrow \mb{P}^3(\F_q)$ such that for every subset $Q \subseteq \mc{P}$
	\begin{enumerate} \item if $Q$ is contained in a line in $\mc{L}$, then $\tau(Q)$ is contained in a line in $\mb{P}^3(\F_q)$ and 
		\item if $Q$ is contained in a plane in $\mc{H}$, then $\tau(Q)$ is contained in a plane of $\mb{P}^3(\F_q)$. 
	\end{enumerate}
	For any planar space $f$, let $B_f(4,q)$ be the number of weak realizations of $f$. 
\end{defn}

In other words, a weak realization of $f$ is an injective mapping $\mc{P} \rightarrow \mb{P}^3(\Fq)$ so that all lines in $\mc{L}$ and all planes in $\mc{H}$ are preserved, but extra collinearities or coplanarities may be imposed. From these definitions, we see that
$$
B_f(4,q) = \sum_{g \geq f} A_g(4,q).
$$

We are ready to state the main lemma, which is a generalization of \cite[Lemma 2.10]{kklpw}. The idea is that we can rewrite the number of weak realizations of a planar space $f$ on $n$ points in terms of a $\Z[q]$-linear combination of the number of strong realizations of a planar space on $n-1$ points.
\begin{lemma}
\label{mainlemma}
Suppose that a planar space $f$ on $n \geq 4$ points has a point of index $(i,j)$ where $i< 4$ and $j<3$ and $(i,j) \ne (3,0)$. Then we have
$$
B_f(4,q) = \sum_{g \geq f'} \mu(f,g) A_g(4,q)
$$
where $f'$ is the planar space obtained from removing the point of index $(i,j)$ from $f$ and $\mu(f,g)$ is a polynomial in $q$. 
\end{lemma}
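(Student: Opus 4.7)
The plan is to peel off the low-index point $P$ and express $B_f(4,q)$ in terms of strong-realization counts on $f'$. Any weak realization $\tau$ of $f$ restricts to a weak realization of $f'$ on the remaining $n-1$ points, and every weak realization of $f'$ is a \emph{strong} realization of a unique $g \geq f'$, namely the planar space recording exactly the collinearities and coplanarities that occur in $\mb{P}^3(\F_q)$ under the restricted map. Grouping weak realizations of $f$ by this $g$ and by the image of $P$ yields
\[
B_f(4,q) \;=\; \sum_{g \geq f'}\;\sum_{\tau' \text{ a strong realization of }g} N(f,\tau'),
\]
where $N(f,\tau')$ counts placements of $P$ that extend $\tau'$ to a weak realization of $f$. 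The task reduces to showing that $N(f,\tau')$ depends only on $g$ and $q$, so one may set $\mu(f,g):=N(f,\tau')$ to obtain the claimed formula, and that $\mu(f,g)\in\Z[q]$.

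Given $\tau'$, the point $\tau(P)$ must lie in the intersection $V\subseteq\mb{P}^3(\F_q)$ of the plane spanned by $\tau'(\pi\setminus\{P\})$ for each of the $i$ full planes $\pi$ of $f$ through $P$, and the line spanned by $\tau'(\ell\setminus\{P\})$ for each of the $j$ full lines $\ell$ of $f$ through $P$. Since $\tau(P)$ must differ from the other $n-1$ images, $N(f,\tau')=|V|-|V\cap\tau'(\mc{P}\setminus\{P\})|$. Because $\tau'$ is a \emph{strong} realization of $g$, any geometric coincidence in $\mb{P}^3(\F_q)$ beyond those already recorded by $g$---two realized planes collapsing into one, a realized line lying in a realized plane, and so on---is forbidden, since it would create a full plane or full line not present in $g$. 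This should force both the dimension of $V$ and the number of configuration points lying on $V$ to be determined purely by $g$, so that $|V|$ is a polynomial in $q$ and $\mu(f,g)\in\Z[q]$. Establishing this in every allowable index $(i,j)$ is the main content of the proof.

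The principal obstacle lies in that case analysis, and in particular in explaining why $(3,0)$ must be excluded. With three full planes through $P$ and no full line through $P$, the three realized planes generically meet in a single point of $\mb{P}^3(\F_q)$, but they may accidentally share a common line; whether they do is a geometric property of $\tau'$ that $g$ cannot detect when that common line carries fewer than three configuration points, so $|V|$ is not determined by $g$. For every other allowable $(i,j)$, a short argument from the planar space axioms and the strong-realization condition pins down the geometry: two distinct full planes of $g$ among the constraints on $P$ cannot collapse in $\mb{P}^3(\F_q)$ without producing a larger full plane of $g$ (so their realized intersection is forced to be a line), containment of a realized line in a realized plane is detected by two shared configuration points, and two intersecting full lines through $P$ already force a common full plane in $f$. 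Working through each remaining index in this spirit and tracking the resulting polynomial in $q$ completes the proof.
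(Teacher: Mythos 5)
Your proposal follows the same route as the paper: restrict a weak realization of $f$ to the $n-1$ remaining points, classify that restriction as a strong realization of a unique $g \ge f'$, and show that the number of admissible placements of the removed point depends only on $g$ and $q$ via a case analysis on the index $(i,j)$. You have correctly identified every key ingredient --- the reason $(3,0)$ must be excluded, and the geometric facts (distinct full planes of $g$ cannot collapse undetected, intersecting full lines through the point force a common full plane, degenerate ``plane contained in a line'' subcases) that drive the paper's explicit case-by-case computation of $\mu(f,g)$ --- so what remains is only the bookkeeping for each index that the paper carries out in full.
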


\begin{proof}
Let $f$ be a planar space and let $m$ be a point of index $(i,j)$ for which $i < 4$ and $j < 3$ and $(i,j) \ne (3,0)$. Reorder the points in $f$ so that $m$ is the last point. Let $f'$ be the planar space obtained from removing the point $m$ of index $(i,j)$ from $f$. We can form any weak realization of $f$ in $\mb{P}^3(\mb{F}_q)$ by taking a strong realization of $g \geq f'$ and adding back the point $m$. Observe that adding a point to $g$ will give a weak realization of $f$ since extra collinearities or coplanarities may be formed. For each $g$, $\mu(f,g)$ is the number of ways to add a point to a strong realization of $g$ to get a weak realization of $f$. In order to prove this lemma, we must show that $\mu(f,g)$ is a polynomial in $\Z[q]$ for every $g \ge f'$. In order to do this, we work by cases depending on the index.\\

For each $g$, let $P_g$ be a set of $n-1$ ordered points in $\mb{P}^3(\Fq)$ that form a strong realization of $g$. In this proof, we now work in $\mb{P}^3(\Fq)$ rather than considering planar spaces abstractly. Thus the points, lines, and planes in $g$ must satisfy all properties of finite projective 3-space over $\Fq$. For example, two distinct planes must intersect at a line. See \cite[page 126]{moorhouse} for the axioms of $\mb{P}^3(\Fq)$.\\

\noindent\textbf{Index (0,0)} Suppose we remove point $m$ from $f$ to get $f'$. Let $g \ge f'$. We must add a point to $g$ to obtain a weak realization of $f$. Since point $m$ is not contained in any lines or planes of $f$, we can simply choose any remaining point to get a weak realization of $f$. Therefore
$$
\mu(f,g) = (q^3 + q^2 + q + 1) - (n-1).
$$
\vspace{.05in}

\noindent\textbf{Index (0,1)} Let $L'$ be the line in $f'$ corresponding to the line in $f$ that contained $m$. Extend this line $L'$ to the line $L_g$ in $g$. Adding any point of $L_g$ not already in $P_g$ gives a weak realization of $f$. Thus
$$
\mu(f,g) = (q+1) -\# (P_g\cap L_g).
$$
\vspace{.05in}

\noindent\textbf{Index (0,2)} It is impossible for $f$ to have a point of index $(0,2)$ since any two intersecting full lines in $\mb{P}^3(\Fq)$ are contained in a full plane. 

\vspace{.2in}

\noindent\textbf{Index (1,0)} Let $H$ be the plane in $f$ containing $m$ and let $H'$ be the corresponding subset of points in $f'$.  If $H'$ is contained in some line of $g$, then adding any point not in $g$ gives a weak realization of $f$. Thus
$$
\mu(f,g) = (q^3+q^2+q+1) - (n-1).
$$

Otherwise, extend $H'$ to the plane $H_g$ in $g \geq f'$. We can add any point to $H_g$ that is not already in the realization of $g$. Thus
$$
\mu(f,g) =
(q^2 +q +1) - \#(P_g \cap H_g).
$$
\vspace{.05in}

\noindent\textbf{Index (1,1)} The point $m$ is contained in a plane $H$ and a line $L$ in the planar space $f$. The line $L$ must be contained in the plane $H$. If not, then take a point $r \ne m$ in $H$ that does not lie on $L$. The plane $\{r\} \cup L$ is a plane in $f$ containing $m$ that is distinct from $H$. But this implies that $m$ does not have index $(1,1)$.

Let $H'$ and $L'$ be the subsets of points in $f'$ corresponding to $H$ and $L$ after removing point $m$. By the above argument, $L' \subset H'$. Extend $L'$ to the line $L_g$ in $g$. We claim that 
$$
\mu(f,g) = (q+1) - \#(P_g \cap L_g).
$$

If $H'$ is contained in a line $L$ in $g$, then it is enough to add the point $m$ to the line $L_g$.
Otherwise, extend $H'$ to the plane $H_g$. Since $L' \subset H$, then $L_g \subset H_g$. Adding a point to $L_g$ also adds a point to the plane $H_g$.
\vspace{.2in}

\noindent\textbf{Index (1,2)} Let $H'$ be the subset of points in $f'$ corresponding to the plane $H$ in $f$ containing $m$. Let $L_1'$ and $L_2'$ be the lines in $f'$ corresponding to the lines $L_1$ and $L_2$ in $f$ containing $m$. Since $L_1 \cap L_2 \ne \emptyset$ and both lines intersect $H$, then $L_1$ and $L_2$ are contained in $H$. Thus $L_1', L_2' \subseteq H'$. Extend $L_1'$ and $L_2'$ to the lines $L_{1,g}$ and $L_{2,g}$ of $g$. If $L_{1,g}$ and $L_{2,g}$ are the same line, then proceed as in case $(1,1)$. 

Otherwise, they must be distinct lines. Suppose that $H'$ is contained in a line $L_g$ of $g$. Then $L_{1,g}$ and $L_{2,g}$ are contained in $L_g$, so $L_{1,g}$ and $L_{2,g}$ are not distinct lines.

Lastly, suppose $H'$ extends to a plane $H_g$ of $g$. Since $L_1'$ and $L_2'$ are contained in $H'$, then $L_{1,g}, L_{2,g} \subseteq H_g$. Thus we simply add the intersection point $r$ of $L_{1,g}$ and $L_{2,g}$ giving
 $$
 \mu(f,g) =
\begin{cases}
0 & r \in P_g\\
1 & r \not \in P_g.
\end{cases} 
 $$
\vspace{.1in}

\noindent\textbf{Index (2,0)} Let $H_1'$ and $H_2'$ be the subsets of points in $f'$ corresponding to the planes containing $m$ in $f$. If $H_1'$ and $H_2'$ are both contained in lines of $g,$ then adding any point gives a weak realization of $f$, so
$$\mu(f,g) = (q^3+q^2+q+1) - (n-1).$$

Now suppose (without loss of generality) that $H_1'$ is contained in a line $L_{1,g}$, but $H_2'$ is not contained in any line of $g$. Extend $H_2'$ to the plane $H_{2,g}$. Adding any point $r$ to $H_{2,g}$ forms the plane $L_{1,g} \cup \{r\}$. Therefore
$$
\mu(f,g) = (q^2+q+1) - \#(P_g \cap H_{2,g}).
$$

We have taken care of all cases for which at least one of $H_1'$ and $H_2'$ is contained in a line of $g.$ Suppose that neither $H_1'$ nor $H_2'$ are contained in a line of $g$. Extend $H_1'$ and $H_2'$ to the planes $H_{1,g}$ and $H_{2,g}$ respectively. If the points in $H_{1,g}$ and $H_{2,g}$ are contained in a single plane $H_g$, then proceed as in case $(1,0)$.

Otherwise, recall that two planes intersect at a line, call it $L_g$. We have
$$
\mu(f,g)= (q+1) - \#(L_g \cap P_g).
$$
\vspace{.05in}

\noindent\textbf{Index (2,1)} Let $H_1'$ and $H_2'$ be the subsets of points in $f'$ corresponding to the planes containing $m$ in $f$. Let $L'$ be the subset of points in $f'$ corresponding to the line containing $m$. Extend $L'$ to $L_g$ in $g$. Observe that a point can have index (2,1) if and only if $L = H_1 \cap H_2$. Thus $L' = H_1' \cap H_2'$. If $H_1'$ and $H_2'$ are contained in the same line or plane in $g$, we proceed as in case (1,1).

We claim that in all other cases, 
$$
\mu(f,g)= (q+1) - \#(P_g\cap L_g).
$$

Observe that $H_1'$ and $H_2'$ cannot be contained in different lines $L_{1,g}$ and $L_{2,g}$ respectively since $L'$ is contained in both $H_1'$ and $H_2'$ and we are assuming $H_1'$ and $H_2'$ are distinct.

Now suppose without loss of generality that $H_1'$ is contained in a line of $g$, but $H_2'$ is not. Extend $H_2'$ to the plane $H_{2,g}$ in $g$. In order to add a point to both planes, we can simply add a point to $H_{2,g}$. This point must also lie on the line $L_g$. Since $L_g \subset H_{2,g}$, then it suffices to add a point to $L_g$. 

Now suppose that $H_1'$ and $H_2'$ extend to the distinct planes $H_{1,g}$ and $H_{2,g}$ respectively. Since $L' = H_1' \cap H_2'$, then $L_g = H_{1,g} \cap H_{2,g}$. Thus adding a point to $L_g$ adds a point to $H_{1,g}$ and $H_{2,g}$ as well.
\vspace{.2in}

\noindent\textbf{Index (2,2)} This case is impossible. Suppose the point $m$ lies on two planes $H_1 = \{a_1, \ldots, a_s, m\}$ and $H_2 = \{ b_1, \ldots, b_t,m\}$ and two lines $L_1$ and $L_2$. Clearly one of these lines, say $L_1$, must be $H_1 \cap H_2$. Since $L_2 \ne L_1$, then the line $L_2$ is contained in exactly one of $H_1$ or $H_2$. Suppose without loss of generality that $L_2 \subset H_1$. Take some point $b_i \in H_2$ not on $L_2$. Then $L_2 \cup \{b_i\}$ forms a plane containing the point $m$ that is distinct from $H_1$ and $H_2$. Thus the point $m$ does not have index (2,2). 
\vspace{.2in}

\noindent\textbf{Index (3,1)}
Let $H_1', H_2'$, and $H_3'$ be the subsets of points in $f'$ corresponding to the planes in $f$ containing $m$. Let $L_1'$ be the line in $f'$ corresponding to the line in $f$ containing $m$. Extend $L_1'$ to $L_{g}$. Observe that the point $m$ has index (3,1) if and only if $L_1'=H_1' \cap H_2' \cap H_3'.$

If all three of $H_1', H_2'$, and $H_3'$ extend to the same line or plane in $g$, we proceed as in case (1,1). 

If two of $H_1', H_2'$, and $H_3'$ extend to the same line or plane, but the third does not, we proceed as in case (2,1).

Otherwise, all three extend to distinct lines or planes in $g$.

We claim that in all cases
$$
\mu(f,g) = (q+1) - \#(P_g \cap L_g).
$$

Observe that in order for $H_1', H_2'$, and $H_3'$ to extend to distinct objects, at most one of $H_j'$ can extend to a line since $L_1' \subset H_i'$ for $i=1,2,3$. 

Suppose that $H_1'$ extends to the line $L_{1,g}$ and that $H_2', H_3'$ extend to the planes $H_{2,g}, H_{3,g}$ respectively. We must add a point to $L = H_{2,g} \cap H_{3,g}$ and to $L_g$. Since $L_1'$ is contained in $H_2'$ and $H_3'$, then $L_g$ must be contained in both $H_{2,g}$ and $H_{3,g}$. Thus $L_g = L$, so we must add a point to $L_g$.

Lastly, suppose that $H_1', H_2'$, and $H_3'$ all extend to distinct planes $H_{1,g}, H_{2,g}$, and $H_{3,g}$ respectively. It suffices to add a point to $L_g \subset H_{i,g}$ for $i=1,2,3$.
\vspace{.2in}

\noindent\textbf{Index (3,2)}
Let $H_1', H_2',$ and $H_3'$ be the subsets of points in $f'$ corresponding to the planes containing $m$ in $f$. Let $L_1'$ and $L_2'$ be the lines in $f'$ corresponding to the lines $L_1$ and $L_2$ containing $m$ in $f$. Observe that $L_1$ and $L_2$ are coplanar in $f$, so $L_1'$ and $L_2'$ must be coplanar in $f'$.  In particular, $L_1'$ and $L_2'$ are contained in $H_i'$ for some $i \in \{1,2,3\}$. Further, $L_1$ and $L_2$ must be intersection lines between pairs of planes corresponding to $H_1', H_2'$, and $H_3'$. Extend $L_1'$ and $L_2'$ to the lines $L_{1,g}$ and $L_{2,g}$ in $g$.

If $L_1'$ and $L_2'$ are contained in the same line in $g$, let $i$ be the number of distinct lines or planes that extend $H_1', H_2'$, and $H_3'$. We proceed as in case $(i,1)$.

We claim that in all remaining cases, it suffices to add the intersection point $r$ of $L_{1,g}$ and $L_{2,g}$, so 
$$
\mu(f,g) = \begin{cases}
0 &r\in P_g\\
1 & r\not \in P_g.
\end{cases}
$$

Suppose first that $H_1', H_2',$ and $H_3'$ extend to lines in $g$.  This case is impossible since $L_{1,g}$ and $L_{2,g}$ must be contained in (and so equal to) one of these lines, but we are assuming $L_{1,g} \ne L_{2,g}$.

Next, suppose that $H_1'$ and $H_2'$ extend to lines $L_{1,g}^H$ and $L_{2,g}^H$ in $g$, but $H_3'$ extends to the plane $H_g$ in $g$. Recall that $L_1'$ and $L_2'$ lie in $H_i'$ for some $i \in \{1,2,3\}$. Since $L_{1,g}  \ne L_{2,g}$, then $L_1', L_2' \subseteq H_3'$, so $L_{1,g}$ and $L_{2,g}$ are contained in $H_g$. Adding the intersection point $r$ to $g$ creates a weak realization of $f$.

Suppose that $H_1'$ extends to a line $L$ in $g$, but $H_2'$ and $H_3'$ extend to planes $H_{2,g}$ and $H_{3,g}$ respectively. If $H_{2,g} = H_{3,g}$, then $L_{1,g}$ and $L_{2,g}$ must lie in this plane. Thus it suffices to add their intersection point. Otherwise, suppose the planes $H_{2,g}$ and $H_{3,g}$ intersect at a line $L$. By the observation at the beginning of this case, $L = L_{1,g}$ or $L = L_{2,g}$.

Lastly, suppose that $H_1', H_2'$, and $H_3'$ extend to planes $H_{1,g}, H_{2,g}$, and $H_{3,g}$ in $g$ respectively. If all three planes are the same, since $L_{1,g}$ and $L_{2,g}$ must lie on this plane, we can add their intersection point $r$ to get a weak realization of $f$. Suppose these three planes intersect at a line $L$. Then $L =L_{1,g} = L_{2,g}$. Since we are assuming $L_{1,g} \ne L_{2,g}$, this is impossible.

Otherwise, the three planes intersect at a point. By construction, this point must be the intersection point $r$ of $L_{1,g}$ and $L_{2,g}$.

\end{proof}

\begin{rmk}
	It may seem like this lemma does not take into account that lines can be skew in $\mb{P}^3(\Fq)$. However, if we have skew lines $L_1 = \{0,1,2\}$ and $L_2 = \{3,4,5\}$, then by the properties of planar spaces, $\{0,1,2,3\}, \{0,1,2,4\}$,
	$ \{0,1,2,5\}, \{0,3,4,5\}, \{1,3,4,5\}, \{2,3,4,5\}$ are all planes in $f$. Therefore the points included in these two lines all have index $(i,j)$ with $i \ge 4$ and so we do not attempt to remove any of the points on these skew lines. Thus, within the lemma, we can assume all lines must intersect.
\end{rmk}

\begin{rmk}
	We can now discuss Definition \ref{hyperfig_def}. Observe that if a plane in $f$ contains an isomorphic copy of a superfiguration, then $f$ should likely be considered a special object in projective 3-space. 
	
	Suppose $f$ has a point of index $(3,0)$ and suppose we were to remove it to obtain $f'$. Let $H_1', H_2',$ and $H_3'$ be the subsets of points corresponding to the planes in $f$ containing the point of index $(3,0)$. Suppose all three subsets extend to distinct planes $H_{1,g}, H_{2,g}, H_{3,g}$ in $g \ge f'$. Notice that there is ambiguity in how we should add a point to $g$. That is, we do not know whether $H_{1,g}, H_{2,g}$, and $H_{3,g}$ should intersect at a line or at a single point. Certainly if $|H_{1,g} \cap H_{2,g} \cap H_{3,g}| > 1$, we know these three planes intersect at a line; however, if $|H_{1,g} \cap H_{2,g} \cap H_{3,g}| \le 1$, we cannot tell what the intersection type of these planes should be. Since there is ambiguity, we must omit the case (3,0) from Lemma \ref{mainlemma}. Thus $(3,0)$ is included in the definition of hyperfiguration as a surprising index.
\end{rmk}

We can now prove Theorem \ref{p3count}.

\begin{proof}[Proof of Theorem \ref{p3count}]

First note that counting $n$-arcs in $\mb{P}^3(\mb{F}_q)$ is the same as counting sets of $n$ points such that no $4$ lie on a plane. Instead, we will determine $C_{n,4}(q)$ by counting all sets of $n$ points such that at least one set of 4 points forms a plane. Thus $C_{n,4}(q)$ is a linear combination of $A_f(4,q)$ for all planar spaces $f$ on at most $n$ points. We will show that we can simplify this formula by only considering hyperfigurations $h$ on at most $n$ points.

We work inductively on the number of points $m \le n$. We first find $A_f(4,q)$ and $B_f(4,q)$ for the unique planar space on 1 point. Observe that $$A_f(4,q) = B_f(4,q) = (q^3+q^2+q+1) + 0 \cdot B_{f'}(4,q)$$ for the unique planar space $f'$ on 0 points.

Assume that for all $f$ on $m$ points, we can express $A_f(4,q)$ as a $\Z[q]$-linear combination of $A_h(4,q)$ for all hyperfigurations $h$ on at most $m$ points. Fix $f$ on $m+1$ points. If $f$ is not a hyperfiguration, then
$$
A_f(4,q) = B_f(4,q) - \sum_{g > f} A_g(4,q).
$$ 
Use Lemma \ref{mainlemma} to write 
$$B_f(4,q) = \sum_{g \ge f'}  \mu(f,g) A_g(4,q).$$
By induction, we can express each $A_g(4,q)$ as a $\Z[q]$-linear combination of $A_h(4,q)$ for hyperfigurations $h$ on at most $m$ points. If $f$ is a hyperfiguration, we can simply write $A_f(4,q)$.\\

Continuing for all $f$ on $m+1$ points, we see that we can express all $A_f(4,q)$ as a $\Z[q]$-linear combination of $A_h(4,q)$ for hyperfigurations on at most $m+1$ points. By induction, we can continue until $m =n$.\\

To conclude, observe that if $f$ and $g$ are isomorphic planar spaces, then $A_f(4,q) = A_g(4,q)$ and $B_f(4,q) = B_g(4,q)$. 

\end{proof}

Observe that this proof gives an algorithm for counting arcs in $\mb{P}^3(\F_q)$.

\section{Formulas for $C_{n,4}(q)$}\label{count_sec}
In this section, we will prove Theorem \ref{arcform}. It is interesting to note that Kaipa \cite{kaipa} gives the first three main terms for $C_{n,k}(q)$. There is a typo in Kaipa's result that we correct below.

\begin{thm}\cite[Corollary 1.2]{kaipa}\label{kaipa_thm}
	Fix positive integers $n$ and $k$ so that $n > k$. Let $\delta = k(n-k)$, $N = \binom{n}{k}$, and 
	$$
	b_2(k,n) = \frac{N^2-5N+4}{2} -\frac{N\delta(\delta-n-3)}{2(\delta+n+1)} -(n-1)(N-n) -\frac{n^2-3n+2}{2}.
	$$
	For each fixed $n$, the number of $\PGL_k(\Fq)$-equivalence classes of $n$-arcs in $\mb{P}^{k-1}(\Fq)$ is asymptotically equal to
	$$
	q^{\delta -n+1 } -(N-n)q^{\delta-n} +b_2(k,n) q^{\delta-n-1} + O(q^{\delta -n-2}).
	$$
	
\end{thm}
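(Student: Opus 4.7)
The plan is to establish this as a consequence of the algorithmic framework developed in this paper (Theorem \ref{p3count} for $k=4$, and its generalization in Section \ref{higher_dim_sec} for arbitrary $k$), combined with Proposition \ref{mds_arc}. The first step is to translate the statement into one about $C_{n,k}(q)$: because $\PGL_k(\F_q)$ acts simply transitively on ordered $(k+1)$-tuples of points in general position in $\mb{P}^{k-1}$, the generic ordered $n$-arc (for $n>k$) has trivial projective stabilizer. Consequently the number of $\PGL_k(\F_q)$-equivalence classes of ordered $n$-arcs equals $C_{n,k}(q)/|\PGL_k(\F_q)|$ up to contributions from a locus of codimension at least two, which are absorbed into the error term $O(q^{\delta-n-2})$. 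Thus it suffices to compute the three leading Laurent coefficients of $C_{n,k}(q)/|\PGL_k(\F_q)|$ at $q=\infty$.

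For the asymptotic expansion of the numerator, I would apply the hyperfiguration formula $C_{n,k}(q) = p(q) + \sum_h p_h(q) A_h(k,q)$ from the $k$-dimensional analogue of Theorem \ref{p3count}. The polynomial $p(q)$ carries the full top-degree behavior and its first three coefficients can be extracted by an inclusion--exclusion argument on the $N = \binom{n}{k}$ hyperplane incidence conditions: $p(q) = q^{n(k-1)} - c_1 q^{n(k-1)-1} + c_2 q^{n(k-1)-2} + O(q^{n(k-1)-3})$, where $c_1 = n - 1$ comes from the finite-$\mb{P}^{k-1}$ point-count expansion and $c_2$ is an explicit combinatorial constant recording pairs of incidences. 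The hyperfiguration corrections are strictly subleading: the minimal hyperfigurations consist of exactly one distinguished $k$-point hyperplane sitting in an otherwise generic $n$-tuple, there are $N-n$ of them (after accounting for the $n$ points not on the hyperplane), and they contribute precisely $-(N-n)q^{\delta-n}$ after normalization. Hyperfigurations with two hyperplane relations (sharing $0,1,\ldots,k-1$ common points) contribute to the $q^{\delta-n-1}$ coefficient only, along with the subleading pieces of $|\PGL_k(\F_q)|^{-1} = q^{-(k^2-1)}(1 + q^{-2} + O(q^{-3}))$ and of $p(q)$.

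The main obstacle is the combinatorial bookkeeping for $b_2(k,n)$, which is a sum of at least three independent contributions: (a) the coefficient $c_2$ from the inclusion--exclusion expansion of $p(q)$, (b) the subleading terms in $|\PGL_k(\F_q)|^{-1}$, and (c) the explicit polynomial counts $p_h(q)A_h(k,q)$ for each two-hyperplane hyperfiguration $h$, where the configurations split into cases by the size of the intersection of the two distinguished hyperplanes. Reconciling the resulting rational expression with Kaipa's closed form for $b_2(k,n)$ — in particular isolating the typo — is where care is needed. I would locate and correct the typo by specializing to $(k,n)=(4,7)$, using the exact formula for $C_{7,4}(q)$ in Theorem \ref{arcform} to compute $C_{7,4}(q)/|\PGL_4(\F_q)|$ as a Laurent series to the required precision, and matching term-by-term; consistency with Glynn's $k=3$ formulas for $n\leq 9$ provides a further independent check of the corrected expression.
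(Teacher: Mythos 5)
The paper does not prove this statement: it is quoted (with a corrected typo) from Kaipa's paper and is used only as a consistency check on the explicit formulas for $C_{n,4}(q)$, so the relevant question is whether your independent derivation would go through. As written it would not. The central error is attributing the $-(N-n)q^{\delta-n}$ term to ``minimal hyperfigurations consisting of one distinguished $k$-point hyperplane in an otherwise generic $n$-tuple.'' Such a planar space is \emph{not} a hyperfiguration: in the $k=4$ case its points have index $(1,0)$ or $(0,0)$, so it never appears in the sum $\sum_h p_h(q)A_h(k,q)$; its contribution is absorbed into $p(q)$ by repeated application of Lemma \ref{mainlemma}. Indeed, in the one case the paper makes fully explicit (Theorem \ref{alg_thm7}, $k=4$, $n=7$, where $N-n=28$), the hyperfiguration terms first perturb the coefficient of $q^{17}$, i.e.\ at relative order $q^{-4}$, so all three leading coefficients of Kaipa's expansion come from the polynomial part alone. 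Consequently the entire content of the theorem is the inclusion--exclusion bookkeeping over configurations with at most two hyperplane relations that you defer to ``an explicit combinatorial constant $c_2$'' --- that computation \emph{is} the proof, and you have not supplied it. (Your stated value $c_1=n-1$ is also inconsistent with $\theta^n=q^{n(k-1)}+nq^{n(k-1)-1}+\cdots$ for $\theta=\#\mb{P}^{k-1}(\F_q)$.)

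Two further problems. First, for general $k$ the paper's final theorem is only an existence statement: the polynomials $p$ and $p_h$ are not computed, and the paper explicitly notes that computing $\mu(f,g)$ becomes infeasible as $k$ grows, so you cannot lean on ``the $k$-dimensional analogue of Theorem \ref{p3count}'' to extract explicit coefficients uniformly in $k$. Second, a single specialization $(k,n)=(4,7)$ cannot identify or correct a typo in a closed form depending on two parameters; one needs several independent $(k,n)$ values (the paper checks $k=4$ with $5\le n\le 7$, and duality with the $k=3$ data for $n\le 9$ gives more). One small point in your favor: since any $n$-arc with $n>k$ contains $k+1$ points in general position, $\PGL_k(\F_q)$ acts freely on ordered $n$-arcs, so the orbit count is exactly $C_{n,k}(q)/|\PGL_k(\F_q)|$ and no codimension-two error locus is needed.
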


We verify the first three main terms in $C_{n,4}(q)$ when $5 \le n \le 7$ by multiplying the formula in Theorem \ref{kaipa_thm} by $|\PGL_4(\Fq)|$.
\subsection{Verifying formulas for $n \le 6$}
The number of $n$-arcs in $\mb{P}^3(\F_q)$ for $n \le 5$ are simple to count by hand. 
When $n=4$, we choose any three non-collinear points, then select a point not on the plane formed by these three points. A 5-arc is a set of five points in general position. Thus the number of 5-arcs is equal to $|\PGL_4(\F_q)|$. There are no hyperfigurations on $n \le 5$ points, so our algorithm gives formulas that exactly match these counts.

We can also count 6-arcs combinatorially. Observe that any 6-arc determines a unique twisted cubic in $\mathbb{P}^3(\F_q)$. The group $\PGL_4(\F_q)$ acts on the set of twisted cubics. Moreover, under this action, all twisted cubics are projectively equivalent. Thus we can count the number of twisted cubics via the Orbit-Stabilizer Theorem. When $q \ge 5$, the stabilizer of a given twisted cubic is $\PGL_2(\F_q)$. See \cite{bdmp} for more details. Thus when $q \ge 5$, the number of twisted cubics is
$$
 \frac{|\PGL_4(\F_q)|}{|\PGL_2(\F_q)|}.
$$
Let $$P(q+1, 6) = \prod_{i=0}^5 (q+1-i)$$ be the number of ways of choosing six ordered points on the twisted cubic. 
Since we get a different ordered arc for each choice of six points on the twisted cubic, multiplying the previous formula by $P(q+1,6)$ and simplifying gives
\begin{equation}\label{c6eq}
C_{6,4}(q) = (q^2 + q + 1)(q^2 + 1)(q + 1)^2(q - 1)^3(q-2)(q-3)(q-4)q^6.
\end{equation}
Note that when $q < 5$, the number of 6-arcs in $\mb{P}^3(\F_q)$ is equal to 0. Thus \eqref{c6eq} holds for all prime powers $q > 0$. 

Next we verify that our algorithm gives the correct formula for $C_{6,4}(q)$.
\begin{prop} We have
	\begin{align*}
		C_{6,4}(q) &= q^{18} - 9q^{17} + 25q^{16} - 16q^{15} - 58q^{14}- 32q^{13} - 10q^{12} + 82q^{11} \\
		&+ 73q^{10} + 41q^9 - 15q^8 - 66q^7 - 16q^6 + 40A_6(4,q)
	\end{align*}
	where $A_6(4,q)$ is the number of strong realizations of the hyperfiguration on 6 points.
\end{prop}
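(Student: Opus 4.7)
The plan is to execute the inductive algorithm embedded in the proof of Theorem \ref{p3count}, applied to the empty planar space on six points. Let $f_0$ denote the planar space on six points with no full lines and no full planes; a strong realization of $f_0$ in $\mb{P}^3(\F_q)$ is precisely an ordered $6$-arc, so $A_{f_0}(4,q) = C_{6,4}(q)$. Every point of $f_0$ has index $(0,0)$, which is neither forbidden by Definition \ref{hyperfig_def} nor a surprising index, so $f_0$ is not a hyperfiguration and Lemma \ref{mainlemma} applies. The algorithm then expresses $A_{f_0}(4,q)$ as a $\Z[q]$-linear combination of $A_h(4,q)$ over all hyperfigurations $h$ on at most six points; by Table \ref{hyp_table} there is a unique such $h$, and the claim is that its coefficient is $40$ and the polynomial piece is as stated.

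Concretely, I would proceed in three stages. First, enumerate the $21$ isomorphism classes of planar spaces on six points from Table \ref{psf_count}, record the size of each isomorphism class (via automorphisms), and compute the covering relations of the partial order $\geq$; identify the unique hyperfiguration $h_6$. Second, traverse the non-hyperfiguration planar spaces $f$ on at most six points in decreasing order with respect to $\geq$: pick a point $m$ of allowed index $(i,j)$, apply Lemma \ref{mainlemma} to write
$$B_f(4,q) = \sum_{g \geq f'} \mu(f,g)\, A_g(4,q),$$
using the explicit polynomials $\mu(f,g) \in \Z[q]$ from the case analysis in its proof, and then solve $A_f(4,q) = B_f(4,q) - \sum_{g > f} A_g(4,q)$. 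Since there are no hyperfigurations on fewer than six points, every intermediate $A_g(4,q)$ with $g$ on at most five points collapses to a polynomial in $q$, so the only non-polynomial term that can survive is a multiple of $A_{h_6}(4,q) = A_6(4,q)$. Third, substitute back into the expansion of $A_{f_0}(4,q) = B_{f_0}(4,q) - \sum_{g > f_0} A_g(4,q)$ and collect terms.

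The main obstacle is combinatorial bookkeeping rather than anything conceptual: one must correctly enumerate the $21$ planar spaces on six points with their automorphism counts and inclusion relations, and at each reduction step apply the correct branch of Lemma \ref{mainlemma} (which itself depends on whether subsets like $H'$ extend to a line or a plane of the ambient $g$, and whether their intersection points lie in $P_g$). This is mechanical and well suited to the Sage implementation the paper refers to but extremely tedious by hand. As a sanity check, the resulting expression can be compared to the closed form $C_{6,4}(q) = (q^2+q+1)(q^2+1)(q+1)^2(q-1)^3(q-2)(q-3)(q-4)q^6$ obtained from the twisted cubic orbit count in Section \ref{count_sec}; equating the two formulas isolates $A_6(4,q)$ explicitly and confirms the coefficient $40$.
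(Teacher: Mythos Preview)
Your proposal is correct and matches the paper's approach: the proposition is obtained by running the algorithm of Theorem~\ref{p3count} (via Lemma~\ref{mainlemma}) on the empty planar space on six points, which the paper carries out in Sage rather than by hand, and then cross-checks by computing $A_6(4,q)$ directly and comparing with the twisted-cubic count. Your write-up actually spells out the reduction steps in more detail than the paper does, but the underlying method is identical.
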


When $n = 6$, there is exactly one hyperfiguration, which has full planes $\{0, 1, 2, 3\},$ $\{0, 1, 2, 4\}, \{0, 1, 2, 5\}, \{0, 3, 4, 5\},\{1, 3, 4, 5\}, \{2, 3,4 , 5\}$ and full lines given by the sets $\{0, 1, 2\}$ and $\{3,4,5\}$. These lines are necessarily skew. We simply count the number of strong realizations as follows. First select three points on a line. Then pick any point not on that line. These four points lie on a plane, so choose the fifth point to be any point not on this plane. Finally, pick a third point on the line formed by the fourth and fifth points. This gives $$A_6(4,q) = (q^2 + q + 1)(q^2 + 1)(q + 1)^2(q - 1)^2q^6.$$ Plugging this into the formula for 6-arcs in Theorem \ref{arcform} verifies our formula matches the one obtained by counting twisted cubics.

\subsection{Counting 7-arcs}

When $n =7$, we find six distinct non-isomorphic hyperfigurations. They are
\begin{align*}
h_1: \;&\mc{H} = \big\{\{0, 1, 2, 3\}, \{0, 1, 4, 5\}, \{0, 2, 4, 6\}, \{1, 2, 5, 6\}, \{1, 3, 4, 6\}, \{2, 3, 4, 5\}\big\},\\
&\mc{L} =  \big\{\big\}\\\\
h_2: \;&\mc{H}= \big\{\{0, 1, 2, 3\}, \{0, 1, 4, 5\}, \{0, 2, 4, 6\}, \{0, 3, 5, 6\}, \{1, 2, 5, 6\}, \{1, 3, 4, 6\}, \{2, 3, 4, 5\}\big\}, \\
&\mc{L}=\big\{\big\}\\\\
h_3:\; &\mc{H}= \big\{\{0, 1, 2, 3\}, \{0, 1, 2, 4\}, \{0, 1, 2, 5\}, \{0, 1, 2, 6\}, \{0, 3, 4, 5\}, \{1, 3, 4, 6\}, \{2, 3, 5, 6\}\big\},\\
&\mc{L}= \big\{\{0,1,2\}\big\}\\\\
h_4:\; &\mc{H} =\big\{\{0, 1, 2, 3, 4\}, \{0, 1, 2, 5\}, \{0, 1, 2, 6\}, \{0, 3, 4, 5\}, \{0, 3, 4, 6\},\{1, 3, 5, 6\}, \{2, 4, 5, 6\}\big\} ,\\
&\mc{L}=\big\{\{0, 1, 2\}, \{0,3, 4\}\big\}\\\\
h_5:\;&\mc{H}=\big\{\{0, 1, 2, 3, 4\}, \{0, 1, 2, 3, 5\}, \{0, 1, 2, 3, 6\}, \{0, 4, 5, 6\}, \{1, 4, 5, 6\}, \{2, 4, 5, 6\}, \{3, 4, 5,6 \}\big\},\\
&\mc{L}=\big\{\{0, 1, 2,3\}, \{4, 5, 6\}\big\}\\\\
h_6:\; &\mc{H}= \big\{\{0,1,2,3,4,5,6\}\big\},\\
&\mc{L}= \big\{\{0, 1, 2\}, \{0, 3, 4\}, \{0, 5, 6\}, \{1, 3, 5\}, \{1, 4, 6\}, \{2, 3, 6\}, \{2, 4, 5\}\big\}\\
\end{align*}

The hyperfiguration $h_6$ can be thought of as a projection down to the Fano plane. That is, the hyperfiguration has one plane, namely $\{0,1,2,3,4,5,6\}$, and seven lines in this plane that form a Fano plane.

\begin{defn} \cite{glynn07}
We say that $(P, B)$ is a $(n_k)$ configuration in $(k-1)$-dimensional projective space if every point lies on $k$ blocks (hyperplanes) and every block contains $k$ points.
\end{defn}

 This is a non-standard definition of a $(n_k)$ configuration as defined by Glynn \cite{glynn07}. Glynn uses this definition to define the complement of a configuration.
 
\begin{defn}\cite{glynn07}
	Let $(P, B)$ denote an $(n_k)$ configuration in $(k-1)$-dimensional projective space where $P$ represents a set of points and $B$ represents a set of blocks (hyperplanes). Let $B = \{b_1, \ldots, b_n\}$. Define a new $(n_{n-k})$ configuration $(P, B')$ in $(n-k-1)$-dimensional projective space where for each $b_i' \in B$, we say $p \in b_i'$ if and only if $p \not \in b_i$. This is called the \emph{complement} of $(P, B)$.
\end{defn}

\begin{rmk}
	The hyperfiguration $h_2$ is the complement of the Fano plane.
	There is a one-to-one correspondence between the strong realizations of the Fano plane in $\mathbb{P}^2(\Fq)$ and the strong realizations of $h_2$ in $\mb{P}^3(\Fq)$ modulo the collineation group 
	of $\mathbb{P}^2(\Fq)$ and $\mathbb{P}^3(\Fq)$ respectively. 
\end{rmk}

\begin{thm}\label{alg_thm7}
	The number of 7-arcs is given by
	\begin{align*}
		C_{7,4}(q) &= q^{21} - 28q^{20}+ 322q^{19} - 1925q^{18} + 5571q^{17}\\
		& + 839q^{16} - 18320q^{15} - 2695q^{14} + 7455q^{13} + 19111q^{12}\\
		& + 17074q^{11} - 9540q^{10} - 13027q^9 - 19922q^8 + 924q^7\\
		& + 14160q^6 +  \left(595q^3 - 8260q^2 + 20160q - 8820\right) \cdot A_{6}(4,q)\\
		& + 210A_{h_1}(4,q) + 180A_{h_2}(4,q) - 2520A_{h_3}(4,q) + 3780A_{h_5}(4,q).
	\end{align*}
\end{thm}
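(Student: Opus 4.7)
The plan is to execute, for $n=7$, the recursive algorithm from the proof of Theorem \ref{p3count}, starting from the trivial planar space $f_0$ on $7$ points with $\mc{L}=\mc{H}=\emptyset$. Since $C_{7,4}(q) = A_{f_0}(4,q)$, the first step is
\[
A_{f_0}(4,q) \;=\; B_{f_0}(4,q) \;-\; \sum_{g > f_0} A_g(4,q),
\]
where $B_{f_0}(4,q) = \prod_{i=0}^{6}\bigl((q^3+q^2+q+1)-i\bigr)$ counts the ordered $7$-tuples of distinct points in $\mb{P}^3(\Fq)$ and the sum ranges over all non-trivial planar spaces on $7$ points. This immediately contributes most of the polynomial part of the answer and reduces the problem to expressing every $A_g(4,q)$ on the right-hand side in the target form.

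I would then iterate: for each planar space $f$ on at most $7$ points that is \emph{not} a hyperfiguration, Definition \ref{hyperfig_def} furnishes a point of index $(i,j)$ with $i<4$, $j<3$, $(i,j)\ne(3,0)$, and I apply
\[
A_f(4,q) \;=\; B_f(4,q) \;-\; \sum_{g > f} A_g(4,q), \qquad B_f(4,q) \;=\; \sum_{g \geq f'} \mu(f,g)\,A_g(4,q),
\]
where $f'$ is $f$ with the chosen point deleted and $\mu(f,g) \in \Z[q]$ is read off case by case from the proof of Lemma \ref{mainlemma}. Theorem \ref{p3count} guarantees that repeated substitution terminates in an expression involving only polynomials in $q$ and values $A_h(4,q)$ for hyperfigurations $h$ on at most $7$ points. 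Since $A_f(4,q) = A_g(4,q)$ whenever $f \cong g$, contributions group by isomorphism class, leaving the unique hyperfiguration on $6$ points together with $h_1,\ldots,h_6$. Notably, the vanishing of the coefficients of $A_{h_4}(4,q)$ and $A_{h_6}(4,q)$ is not visible a priori and emerges only after the full computation.

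The main obstacle is bookkeeping rather than mathematics. By Table \ref{psf_count} one must enumerate the $73$ isomorphism classes of planar spaces on $7$ points (plus all smaller cases), test each against Definition \ref{hyperfig_def}, and for every non-hyperfiguration $f$ enumerate every planar space $g \ge f'$ together with the correct $\mu(f,g)$ dictated by the index $(i,j)$ of the deleted point. I would carry this out using the Sage implementation alluded to in the statement, and the claimed formula is the output of that program. A partial sanity check is that the three leading terms $q^{21} - 28 q^{20} + 322 q^{19}$ must agree with $|\PGL_4(\Fq)|$ times the asymptotic expansion of Theorem \ref{kaipa_thm} with $\delta = 12$ and $N = 35$; a stronger check is that substituting the individual counts $A_h(4,q)$ for each hyperfiguration on $\le 7$ points into Theorem \ref{alg_thm7} must recover the formula for $C_{7,4}(q)$ given in Theorem \ref{arcform}, which can independently be extracted from Glynn's formula for $C_{7,3}(\Pi)$ via MDS duality.
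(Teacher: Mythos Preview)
Your proposal is correct and follows exactly the approach the paper takes: Theorem \ref{alg_thm7} is obtained by implementing in Sage the recursive algorithm from the proof of Theorem \ref{p3count}, using the case analysis of Lemma \ref{mainlemma} to reduce every non-hyperfiguration on at most $7$ points to the hyperfigurations, and the paper presents the formula as the output of that computation without further argument. Your suggested sanity checks (matching the leading terms against Theorem \ref{kaipa_thm} and recovering Theorem \ref{arcform} via MDS duality with Glynn's $7$-arc formula) are precisely the verifications the paper itself invokes.
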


In order to understand the behavior of $C_{7,4}(q)$ as a function of $q$, we must understand the number of strong realizations of each hyperfiguration. Recall that we can assign a $k\times n$ generator matrix to each $n$-arc in $\mb{P}^{k-1}(\Fq)$ by assigning an affine representative of each point to each column. This generator matrix has the property that no $k \times k$ minor vanishes. Similarly, we can set up a $4 \times n$ generator matrix for each strong realization of a planar space in $\mb{P}^3(\F_q)$. In this case, any four points lie on a plane if and only if the $4 \times 4$ minor formed by these four points is equal to 0 in $\F_q$. More generally, any $\ell\ge 4$ points lie on a plane if and only if all $4 \times 4$ minors formed by the 4-subsets of these points are equal to 0 in $\F_q$. Similarly, any $\ell$ points lie on a line if and only if the $4 \times \ell$ matrix whose columns are these $\ell$ points does not have full rank. In other words, three points lie on a line if and only if all $3\times 3$ minors of the corresponding $4 \times 3$ matrix simultaneously vanish. 

A strong realization of a planar space $f$ then is a solution to the simultaneous vanishing of all minors corresponding to lines and planes in $f$ so that no additional lines and planes are formed.

Observe that all hyperfigurations $h_i$ on at most seven points contain a plane with exactly four points. Without loss of generality, we can set this plane equal to the plane $\{x=0\}$.

Given five general points in $f$, there exists a unique element in $\PGL_4(\Fq)$ that sends these five points to the points $[1:0:0:0], [0:1:0:0], [0:0:1:0], [0:0:0:1],$ and $[1:1:1:1]$ in $\mb{P}^3(\Fq)$. Observe that the points $[0:1:0:0], [0:0:1:0],$ and $[0:0:0:1]$ determine the plane $\{x=0\}$. 

These observations together lead to the following proposition.

\begin{prop}\label{strong_real_alg}
	Suppose that $h$ is a hyperfiguration on 7 points so that its first five points are in general position and $\{1, 2, 3, 5\}$ is a plane in $h$ containing 4 points. Let $$
	M_h = \begin{pmatrix}
		1&0&0&0&1&0&1\\
		0&1&0&0&1&y_1&y_2\\
		0&0&1&0&1&z_1&z_2\\
		0&0&0&1&1&w_1&w_2
		\end{pmatrix}.
			$$	
	Let $V_h$ be the variety defined by all polynomials formed by the vanishing of all $4 \times 4$ minors corresponding to planes in $h$ and $3\times 3$ minors corresponding to lines in $h$. Let $W_h$ be the open subset of $V_h$ for which all other $4 \times 4$ minors do not vanish and all $4 \times 3$ submatrices not corresponding to lines in $h$ have full rank. Then
	$$
	A_h(4,q) = \frac{|\PGL_4(\F_q)|}{(q-1)} \cdot \#W_h(\mb{F}_q).
	$$
\end{prop}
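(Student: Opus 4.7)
The plan is to use the free transitive action of $\PGL_4(\F_q)$ on ordered $5$-tuples of points in general position in $\mb{P}^3(\F_q)$ to normalize the first five points of any strong realization of $h$, reducing the count to a parameter count for the last two points subject to the algebraic conditions encoded by $V_h$ and the open conditions defining $W_h$.

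First I would observe that a strong realization of $h$ is an injective map $\sigma: \mc{P} \to \mb{P}^3(\F_q)$, which after fixing the ordering $\mc{P} = \{0,1,\ldots,6\}$ is an ordered $7$-tuple of points in $\mb{P}^3(\F_q)$. The group $\PGL_4(\F_q)$ acts on such tuples coordinatewise, and this action sends strong realizations of $h$ to strong realizations of $h$. Since the first five points of $h$ are assumed in general position, every strong realization places them in general position in $\mb{P}^3(\F_q)$, and because $\PGL_4(\F_q)$ acts freely and transitively on ordered $5$-tuples of points in general position, each orbit contains a unique representative whose first five points are $[1:0:0:0]$, $[0:1:0:0]$, $[0:0:1:0]$, $[0:0:0:1]$, and $[1:1:1:1]$. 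Thus $A_h(4,q)$ equals $|\PGL_4(\F_q)|$ times the number of strong realizations in this normalized form.

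Next I would parameterize the last two points. The images of the points labeled $1, 2, 3$ span the hyperplane cut out by the vanishing of the first coordinate. Because $\{1,2,3,5\}$ is a plane of $h$, the point labeled $5$ must lie on this hyperplane and admits a projective representative of the form $[0:y_1:z_1:w_1]$, unique up to the scaling $(y_1,z_1,w_1) \mapsto (\lambda y_1, \lambda z_1, \lambda w_1)$ for $\lambda \in \F_q^*$. Since $\{1,2,3,5\}$ contains only four points in $h$, the strong realization condition forbids the point labeled $6$ from lying on the same hyperplane, so its representative has nonzero first coordinate and can be normalized uniquely to $[1:y_2:z_2:w_2]$. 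Writing out these two columns together with the normalized first five produces exactly the matrix $M_h$, depending on six parameters $(y_1,z_1,w_1,y_2,z_2,w_2)$.

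The strong realization conditions now translate directly into algebraic conditions on these parameters: four points are coplanar if and only if the corresponding $4 \times 4$ minor of $M_h$ vanishes, and three points are collinear if and only if the corresponding $4 \times 3$ submatrix drops rank. Imposing vanishing for exactly the planes and lines of $h$, together with non-vanishing for all remaining $4$-point and $3$-point subsets, is precisely the condition defining $W_h$; each normalized strong realization corresponds to $q-1$ parameter tuples in $W_h(\F_q)$ via the column-$6$ scaling, so the number of normalized strong realizations is $\#W_h(\F_q)/(q-1)$, and multiplying by $|\PGL_4(\F_q)|$ gives the claimed formula. The main obstacle will be cleanly verifying that in any strong realization, once the first five points have been normalized, the point labeled $6$ really has nonzero first coordinate, which requires carefully unpacking the ``no extra coplanarities'' condition using the hypothesis that $\{1,2,3,5\}$ has exactly four points in $h$; a secondary bookkeeping concern is ensuring the $q-1$ scaling factor applies only to column $6$ and that the zero vector is automatically excluded by the open conditions defining $W_h$.
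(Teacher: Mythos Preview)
Your argument is correct and matches the paper's own justification, which is given informally in the paragraph preceding the proposition rather than in a separate proof block: normalize the first five points via the simply transitive $\PGL_4(\F_q)$-action, force point $5$ onto the hyperplane $\{x=0\}$ spanned by points $1,2,3$, normalize point $6$ to have leading coordinate $1$, and translate the strong-realization conditions into the vanishing and nonvanishing of minors. Your writeup is in fact more careful than the paper's, since you explicitly isolate why point $6$ cannot lie on $\{x=0\}$ and where the $(q-1)$ overcount arises.

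One small wording issue: when you say ``the column-$6$ scaling,'' be sure you mean the sixth column of $M_h$ (the one $[0:y_1:z_1:w_1]$ representing point $5$), not the column for point $6$; the latter is already dehomogenized and carries no residual scaling. Your earlier paragraph makes clear you understand this, so this is purely a labeling cleanup.
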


Proposition \ref{strong_real_alg} provides a method for computing $A_h(4,q)$ for the hyperfigurations on 7 points provided that the hyperfiguration contains five points in general position. Of course, a strong realization of hyperfiguration $h_i$ may not have the points $\{0,1,2,3,4\}$ in general position or may not contain the 4-point plane $\{1,2,3,5\}$. Thus the columns of a generator matrix for $h_i$ will be a permutation of the columns of $M_h$ up to rescaling each column and possibly also permuting the indices of the variables.


\begin{prop}\label{strong_real7}
Let $$a(q) = \begin{cases}1 & q \equiv 0 \pmod{2}\\0 & q \equiv 1 \pmod{2}\end{cases}.$$	The number of strong realizations for each hyperfiguration is given by

	\begin{align*}
	A_{h_1}(4,q)&=(1-a(q)) \cdot |\PGL_4(\F_q) |\\\\
	A_{h_2}(4,q) &= a(q)  \cdot |\PGL_4(\F_q)|\\\\
		A_{h_3}(3,q) &= (q-2) \cdot |\PGL_4(\F_q)|\\\\
	A_{h_4}(4,q)&=  |\PGL_4(\F_q)| \\\\
	A_{h_5}(4,q) &= (q^2 + q + 1)(q^2 + 1)(q + 1)^2(q - 1)^2(q - 2)q^6\\\\
	A_{h_6}(4,q) &= a(q)\cdot q \cdot (q-1) \cdot (q-2) \cdot |\PGL_3(\F_q)|.\\
	\end{align*}
\end{prop}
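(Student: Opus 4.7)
The plan is to treat each of the six hyperfigurations separately, using Proposition~\ref{strong_real_alg} where its hypotheses hold and a direct coordinate parametrization otherwise.

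For $h_5$ (two skew lines of sizes $4$ and $3$), the count is a direct product: the $(q^2+q+1)(q^2+1)$ lines in $\mb{P}^3(\F_q)$, the $(q+1)q(q-1)(q-2)$ ordered $4$-tuples on a line, the $q^4$ lines skew to a given line (obtained by subtracting the $q(q+1)^2$ lines meeting a fixed line from the total number of lines), and the $(q+1)q(q-1)$ ordered $3$-tuples on the second line. For $h_6$ (a labeled Fano plane inside a single plane of $\mb{P}^3$), each strong realization corresponds bijectively to a pair (plane in $\mb{P}^3(\F_q)$, labeled Fano configuration in that plane). A Fano configuration exists in $\mb{P}^2(\F_q)$ precisely when $\mathrm{char}(\F_q)=2$ (the Fano axiom: the three diagonal points of a quadrilateral are collinear iff characteristic is $2$), and in that case $\PGL_3(\F_q)$ acts on labeled Fanos with stabilizer of the unlabeled configuration equal to $\PGL_3(\F_2)$ and of the labeled configuration trivial, so an orbit--stabilizer count over the $q^3+q^2+q+1$ planes yields the formula.

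For $h_1$, $h_3$, and $h_4$, I would apply Proposition~\ref{strong_real_alg} after relabeling so that positions $0$--$4$ are in general position and $\{1,2,3,5\}$ is a $4$-point plane of $h$. Substituting the $M_h$ coordinates into the determinantal conditions for the planes and lines of the hyperfiguration yields a polynomial system in $(y_1,z_1,w_1,y_2,z_2,w_2)$, and imposing the nondegeneracy open conditions cuts out $W_h$. For $h_3$ these reduce to $w_1=0$, $z_1=y_1$, $z_2=0$, $y_2=1$, leaving the pair $(z_1,w_2)$ with $z_1\in\F_q^\times$ and $w_2\in\F_q\setminus\{0,1\}$, so $\#W_{h_3}(\F_q)=(q-1)(q-2)$. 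For $h_4$ the plane equations fix $w_1=w_2=z_2=0$, $y_2=1$, and $z_1=y_1$, leaving only $y_1\in\F_q^\times$, so $\#W_{h_4}(\F_q)=q-1$. For $h_1$, after using $w_2=0$, $y_1=w_1$, and $y_2=1$, the two remaining equations $z_1=w_1(1-z_2)$ and $w_1z_2=z_1$ combine to $w_1(2z_2-1)=0$; the nondegeneracy $w_1\neq 0$ then forces $2z_2=1$, which has a solution in $\F_q$ iff $\mathrm{char}(\F_q)\neq 2$, yielding $q-1$ points in that case and $0$ otherwise. Applying $A_h(4,q)=\frac{|\PGL_4(\F_q)|}{q-1}\cdot\#W_h(\F_q)$ then gives the three stated formulas.

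The main obstacle is $h_2$, the complement of the Fano plane: every $5$-subset of $h_2$ contains a $4$-point plane (since the seven Fano lines cover every pair of points, their complementary planes cover every $5$-subset), so no labeling satisfies the hypotheses of Proposition~\ref{strong_real_alg}. I would instead fix the plane $\{0,1,2,3\}$ of $h_2$ as $\{x_4=0\}$ with the four points at $[1{:}0{:}0{:}0],[0{:}1{:}0{:}0],[0{:}0{:}1{:}0],[1{:}1{:}1{:}0]$, then parametrize $4,5,6$ by affine coordinates $(a_i,b_i,c_i,1)$. The remaining six plane conditions translate into six linear relations among the nine parameters, and elimination produces the single compatibility equation $2(a_5-a_4)=0$. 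In characteristic $2$ this is automatic and leaves $(a_4,b_4,c_4,a_5)$ free with nondegeneracy $a_5\neq a_4$, giving $q^3(q-1)$ canonical configurations; multiplying by the $|\PGL_4(\F_q)|/(q^3(q-1))$ pairs (plane, frame) yields $A_{h_2}=|\PGL_4(\F_q)|$. In odd characteristic the equation forces $a_5=a_4$, which collapses $4=5=6$, so no strong realization exists. The delicate step throughout is checking the nondegeneracy conditions for each $h_i$ (that no forbidden coplanarity or collinearity is automatically imposed by the prescribed plane equations); for $h_2$ the key technical check is verifying that $a_5\neq a_4$ is the only condition needed, which follows from a direct computation showing that $\{0,4,5,6\}$ is non-coplanar precisely when $a_5\neq a_4$.
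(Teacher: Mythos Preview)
For $h_1,h_3,h_4$ you follow the paper in applying Proposition~\ref{strong_real_alg}; your intermediate equations differ from the paper's because you use a different relabeling of the seven points, but the method and the resulting point counts $\#W_{h_i}(\F_q)$ coincide. For $h_5$ you spell out the skew-line count that the paper only sketches.

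For $h_2$ you take a genuinely different route. The paper invokes the complement duality between $h_2$ and the Fano plane and transfers the known Fano realization count across that correspondence; you instead fix the plane $\{0,1,2,3\}$ at a standard frame in $\{x_4=0\}$, parametrize points $4,5,6$ affinely, and solve the six remaining coplanarity equations directly to obtain the single compatibility $2(a_5-a_4)=0$. Your argument is more self-contained and avoids appealing to an unproved correspondence, at the cost of a longer (and here only asserted) nondegeneracy verification.

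For $h_6$ there is a genuine discrepancy. Your bijection between strong realizations of $h_6$ and pairs (plane in $\mb{P}^3(\F_q)$, labeled Fano in that plane) is correct, but it yields $(q^3+q^2+q+1)\,a(q)\,|\PGL_3(\F_q)|$, which is \emph{not} the stated $a(q)\,q(q-1)(q-2)\,|\PGL_3(\F_q)|$: already at $q=2$ your count gives $15\cdot168=2520$ while the stated formula gives $0$. The paper argues by a different mechanism---lifting a fixed Fano embedding in $\mb{P}^2$ to $\mb{P}^3$ by prepending a coordinate row---and its displayed formula is evidently in error (every plane of $\mb{P}^3(\F_2)$ is a Fano plane, so strong realizations certainly exist there). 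This has no effect on Theorem~\ref{arcform}, since $A_{h_6}$ carries coefficient zero in Theorem~\ref{alg_thm7}; nonetheless your claim that the orbit--stabilizer count ``yields the formula'' is false as written.
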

\begin{proof} We consider each hyperfiguration $h_i$ separately.\\
	
\noindent\textbf{Hyperfiguration $h_1$:} The points 0, 3, 4, 5, and 6 are in general position. Since the plane $\{2,3,4,5\}$ contains exactly four points, we can fix this plane to be $x=0$. This gives the following generator matrix
	$$M_{h_1}=
	\begin{pmatrix}
		1 &1 &0&0&0&0&1\\
		0 & y_1&y_2&1&0&0&1\\
		0 &z_1&z_2&0&1&0&1\\
		0&w_1&w_2&0&0&1&1
	\end{pmatrix},
	$$
	which is a permutation of the matrix $M_h$ from Proposition \ref{strong_real_alg}.
	By computing the determinants corresponding to the planes in $h_1$, we obtain a variety $V_{h_1}$ defined by the polynomials
	$$\begin{cases}
		&-w_1+1\\
		&y_2z_1	-y_1z_2-y_2+z_2\\
		&y_1\\
		&y_2-w_2\\
		&-z_2w_1 + z_1w_2.
	\end{cases}$$
	Substituting shows that we can understand the number of $\Fq$-points on $V_{h_1}$ by understanding the solutions to $$2z_1 w_2 - w_2 =0.$$
	
	We then compute $W_{h_1}$, the open subset of $V_{h_1}$ which disallows additional collinearities or coplanarities in the realization of $h_1$. We find that $W_{h_1}$ is defined by the vanishing of the polynomials defining $V_{h_1}$ together with the following inequalities
	
	\begin{equation}\label{h1_ineqs}
	\begin{cases}
		&z_1 \ne 0,1\\
		&w_2 \ne 0.
	\end{cases}
	\end{equation}
	
	If the characteristic of $\F_q$ is even, then $w_2=0$, which is impossible by \eqref{h1_ineqs}. If the characteristic of $\Fq$ is odd, we have two cases: either $w_2=0$ or $z_1 = 2^{-1}$. Since $w_2 \ne 0$ by \eqref{h1_ineqs}, then we must have $z_1 = 2^{-1}$.\\

\noindent\textbf{Hyperfiguration $h_2$:} As remarked above, this hyperfiguration is the complement of the Fano plane in $\mb{P}^2(\Fq)$. Further, there is a one-to-one correspondence between strong realizations of the Fano plane and $h_2$ modulo their collineation groups. We derive $A_{h_2}(4,q)$ from the number of strong realizations of the Fano plane in $\mb{P}^2(\Fq)$.\\

\noindent\textbf{Hyperfiguration $h_3$:} As before we set up a generator matrix 
$$M_{h_3}=
\begin{pmatrix}
	0 &0&0&1&0&1&1\\
	y_1 & 1&0&y_2&0&0&1\\
	z_1 &0&1&z_2&0&0&1\\
	w_1&0&0&w_2&1&0&1
\end{pmatrix}
$$
and consider the vanishing of all minors corresponding to planes and lines in $h_3$. We arrive at the following set of equations
$$\begin{cases}
	&w_1=0\\
	&z_2=1\\
	&y_2=w_2\\
	&y_1=y_2z_1.
\end{cases}$$
We have the inequalities
$$
\begin{cases}
	&w_2 \ne 0,1\\
	&z_1 \ne 0.
\end{cases}
$$
Thus any choice of $z_1 \ne 0$ and $w_2 \ne 0,1$ gives a strong realization of $h_3$.\\

\noindent\textbf{Hyperfiguration $h_4$:} The points 0, 1, 4, 5 and 6 are in general position. We set up the matrix
	$$M_{h_4}=
	\begin{pmatrix}
		1& 1 & 0 &1 & 0&0&0\\
		0&1&y_1&y_2&1&0&0\\
		0&1&z_1 &z_2&0&1&0\\
		0&1&w_1&w_2&0&0&1
	\end{pmatrix}.
	$$
	Computing the determinants that correspond to the planes and lines in $h_4$ gives the equations 
	$$
	\begin{cases}
		&y_1=z_1=w_1\\
		&w_2=z_2 =0\\
		&y_2 = 1.
	\end{cases}
$$
Further, the inequalities reduce to
$$
w_1 \ne 0.
$$
Once we choose a value for $w_1$, every other variable is determined.\\ 

\noindent\textbf{Hyperfiguration $h_5$:} We can compute $A_{h_5}(4,q)$ by counting. This is computed similarly to that of the hyperfiguration on 6 points.\\
	
\noindent\textbf{Hyperfiguration $h_6$:} It is well-known that the number of strong realizations of the Fano plane in $\mb{P}^2(\F_q)$ is $|\PGL_3(\F_q)| \cdot a(q)$. We can fix the embedding of the Fano plane into $\mb{P}^2(\F_q)$ given by
$$
\begin{pmatrix}
	0 & 0 & 0 &1&1&1&1\\
	1&0&1&0&1&0&1\\
	0&1&1&0&0&1&1
\end{pmatrix}.
$$
In order to determine the number of strong realizations of $h_6$ in $\mb{P}^3(\F_q)$, we add an additional coordinate to each of the points in $\mb{P}^2(\F_q)$ as follows
$$
\begin{pmatrix}
	x_1 & x_2 & x_3 &x_4 &x_5&x_6&x_7\\
	0 & 0 & 0 &1&1&1&1\\
	1&0&1&0&1&0&1\\
	0&1&1&0&0&1&1
\end{pmatrix}.
$$
Clearly there are no realizations when $q$ is odd, so we assume $q$ is even.
Since all points must lie in a single plane, all $4 \times 4$ minors must vanish. Solving this system when $q$ is even gives
$$
\begin{cases}
	x_1 =x_6 + x_7\\
	x_2 = x_5 + x_7\\
	x_3 = x_5 + x_6\\
	x_4 = x_5 + x_6 + x_7.
\end{cases}
$$
Further, we get the following inequalities:
$$\begin{cases}
	x_5 &\ne x_6 + x_7\\
	x_5 &\ne x_7\\
	x_6 &\ne x_7.
\end{cases}$$

We can choose $x_7 \in \F_q$. We then select $x_6 \ne x_7$, and $x_5 \ne x_6 + x_7$ or $x_7$. There are $q(q-1)(q-2)$ such choices. Once these values are chosen, the variables $x_i$ for $1\le i \le 4$ are fixed.
\end{proof}

Theorem \ref{arcform} follows from Theorem \ref{alg_thm7} and Proposition \ref{strong_real7}. Notice that $C_{7,4}(q)$ is a quasipolynomial in $q$. 

\subsection{Using this strategy to count $C_{n,4}(q)$ for larger $n$}
Table \ref{psf_count} shows that the number of planar spaces likely grows exponentially in $n$. Further, the number of hyperfigurations on $n$ points grows quickly.

Therefore is seems that this strategy for counting $n$-arcs will quickly become infeasible. In future work, we intend to study 8-arcs in $\mb{P}^3(\F_q)$. While it is likely time-consuming to compute $C_{8,4}(q)$ exactly, we will determine whether or not this counting function is a quasipolynomial in $q$.

\section{Generalizing to higher-dimensional projective space}\label{higher_dim_sec}

In order to generalize the ideas in this paper to produce a simpler expression for $C_{n,k}(q)$, we must understand what objects generalize planar spaces and hyperfigurations. There is a natural generalization of planar space. 

\begin{defn}
	For $k \ge 3$, a $k$-planar space is a tuple $(\mc{P}, \mc{H}_1, \mc{H}_2, \ldots , \mc{H}_{k-2})$ where $\mc{H}_i \subset 2^{\mc{P}}$ for each $1\le i \le k-2$ and every $(i+1)$ distinct points that do not lie in a subset $H \subset \mc{H}_j$ for $1 \le j < i$ form a unique subspace in $\mc{H}_i$. Observe that this is a $(k-1)$-dimensional space.
\end{defn}

Recall that the proof of Theorem \ref{p3count} follows from induction by using Lemma \ref{mainlemma} to express realizations of non-hyperfigurations on $n$ points in terms of realizations of hyperfigurations on at most $n-1$ points. The definition for hyperfiguration was derived by simply considering all planar spaces for which Lemma \ref{mainlemma} did not apply. Our generalization of a hyperfiguration will be defined similarly: we will determine for which indices in $f$ it is possible to prove that $\mu(f,g)$ is a polynomial in $q$.

\begin{defn}
	The \emph{index} of a point $m$ of a $k$-planar space is given by $(i_1, i_2, \ldots, i_{k-2})$ where $i_j$ is the number of $(k-1-j)$-dimensional objects incident with point $m$.
\end{defn}

\begin{defn}
	A \emph{k-hyperfiguration} is a $k$-planar space for which the index of every point satisfies $i_j > k-j$ for some $1 \le j \le k-2$ or the index is in a finite set of surprising indices.
\end{defn}

It is natural to wonder how many surprising indices there are for each $k > 4$. If the number of surprising indices grows too quickly, it is possible that most $k$-planar spaces are $k$-hyperfigurations. We demonstrate upper and lower bounds on the size of the set of surprising indices. 

\begin{prop}\label{onetwoline}
	Let $f$  be a $k$-planar space and let $m$ be a point with index $(i_1, i_2, \ldots, i_{k-2})$ such that $0 < i_{k-2} \le 2$. Then for any $g \ge f'$, $\mu(f,g)$ is a polynomial in $q$.
\end{prop}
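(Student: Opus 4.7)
The plan is to mirror the case-by-case structure of the proof of Lemma \ref{mainlemma}, replacing the $\mb{P}^3(\Fq)$-specific arguments by the general projective-geometry fact that a line and any projective subspace are either nested or meet in at most one point. Fix a strong realization $P_g$ of $g$ in $\mb{P}^{k-1}(\Fq)$; then $\mu(f,g)$ counts the points $m'\in \mb{P}^{k-1}(\Fq)\setminus P_g$ such that adding $m'$ to $P_g$ yields a weak realization of $f$. Equivalently, $m'$ must lie in the geometric subspace $\tilde{S}_g$ spanned by the image of $S\setminus\{m\}$ for every subspace $S$ of $f$ containing $m$.

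First I would handle the case $i_{k-2}=1$. Let $L$ be the unique line of $f$ through $m$. Since $|L\setminus\{m\}|\ge 2$, its image in $P_g$ spans a well-defined geometric line $\tilde{L}_g\subset\mb{P}^{k-1}(\Fq)$, and the necessary condition $m'\in\tilde{L}_g$ restricts $m'$ to at most $q+1-\#(P_g\cap\tilde{L}_g)$ candidates. For every higher-dimensional subspace $H$ of $f$ through $m$, the further requirement $m'\in\tilde{H}_g$ intersects $\tilde{L}_g$ in a linear sub-subspace of $\tilde{L}_g$, which is either all of $\tilde{L}_g$ (when $\tilde{L}_g\subseteq\tilde{H}_g$), a single point, or empty. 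Intersecting over all such $H$, the valid locus for $m'$ remains one of these three types. Thus $\mu(f,g)$ equals $(q+1)-\#(P_g\cap\tilde{L}_g)$, $0$, or $1$, each a polynomial in $q$.

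Next I would handle the case $i_{k-2}=2$. Let $L_1,L_2$ be the two distinct lines of $f$ through $m$. The conditions $m'\in\tilde{L}_{1,g}$ and $m'\in\tilde{L}_{2,g}$ pin $m'$ to $\tilde{L}_{1,g}\cap\tilde{L}_{2,g}$. If $\tilde{L}_{1,g}=\tilde{L}_{2,g}$, the analysis reduces verbatim to the previous case. Otherwise these two distinct projective lines meet in at most one point (zero if they are skew), so $m'$ is pinned to at most one point of $\mb{P}^{k-1}(\Fq)$, and incorporating any further higher-dimensional subspace constraints only restricts further. Thus $\mu(f,g)\in\{0,1\}$, again a polynomial.

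The main obstacle I anticipate is verifying cleanly that no combination of higher-dimensional subspace constraints through $m$ can produce a non-polynomial count. The resolution is the dimensional observation that once $m'$ is pinned to the line $\tilde{L}_g$ (or to the intersection of two such lines), every further subspace condition cuts this locus down to a linear sub-subspace, which is necessarily $\tilde{L}_g$ itself, a single point, or empty. This is also what makes the presence of at least one line through $m$ essential: the surprising-index phenomenon that required the special case $(3,0)$ in the $k=4$ definition arises precisely when no line is available to pin $m$ to a $1$-dimensional locus, and that ambiguity cannot occur as long as $i_{k-2}\ge 1$.
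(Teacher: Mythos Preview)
Your argument diverges from the paper's in one essential respect. For $i_{k-2}=1$ the paper asserts that the unique full line $L$ through $m$ is contained in \emph{every} full higher-dimensional subspace $H$ of $f$ through $m$, so that $L'\subset H'$ and hence $\tilde L_g\subset\tilde H_g$; the higher constraints are then redundant and $\mu(f,g)=(q+1)-\#(P_g\cap L_g)$ outright. For $i_{k-2}=2$ it likewise asserts that the intersection point of the two lines already satisfies every higher-dimensional constraint. You do not invoke any such containment and instead allow the locus $\tilde L_g\cap\bigcap_H\tilde H_g$ to be a proper sub-subspace of $\tilde L_g$.

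That extra generality creates a gap you have not closed. You conclude that the locus is ``$\tilde L_g$, a single point, or empty,'' but you never check that \emph{which} of these occurs --- and, in the single-point case, whether that point lies in $P_g$ --- is determined by the combinatorics of $g$ alone rather than by the particular strong realization. This is exactly what is needed for $\mu(f,g)$ to be a well-defined function of $(f,g)$, let alone a polynomial in $q$. The danger is concrete: if two subspaces $H,H'$ through $m$ both fail to contain $L$, then $\tilde L_g\cap\tilde H_g$ and $\tilde L_g\cap\tilde H'_g$ may each be a single point on $\tilde L_g$, and whether these two points coincide depends on the coefficients in the underlying linear dependencies, which are not matroid invariants of $g$ in general. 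The paper's containment claim $L\subset H$, if one grants it, sidesteps this issue entirely; your trichotomy needs an additional argument (or the same containment) to rule out realization-dependence.
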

\begin{proof}
	Recall that $i_{k-2}$ denotes the number of lines incident with the point $m$. Fix some $g \ge f'$ and let $P_g$ be a strong realization of $g$.
	
	If $i_{k-2} = 1$, let $L$ be the line in $f$ containing $m$ and let $L'$ be the corresponding line in $f'$. For every set $H'$ in $f'$ corresponding to the $a$-dimensional object in $f$ containing $m$, we must have $L' \subset H'$. Extend $L'$ to $L_g$ in $g$. For every extension $H_g$ of $H'$ in $g$, we must have $L_g \subset H_g$. Thus adding any point to $L_g$ gives a weak realization of $f$, so
	$$
	\mu(f,g) = q+1 - \#(P_g \cap L_g).
	$$
	
	If $i_{k-2} = 2$, then the point $m$ lies on the intersection of two lines $L_1$ and $L_2$ in $f$. The corresponding lines $L_{1,g}$ and $L_{2,g}$ must intersect in $g$. It suffices to add their intersection point $r$ to $g$ to obtain a weak realization of $f$. Thus
	$$
	\mu(f,g) = \begin{cases}
		0 & r \in P_g\\
		1& r \not \in P_g
		\end{cases}.
	$$
\end{proof}

\begin{prop}\label{lb_surprising_index}
	Let $k > 3$ and let $S_{k}$ be the number of surprising indices in $\mb{P}^{k-1}(\Fq)$. Then
	$$
	S_k \ge (k-1)S_{k-1}.
	$$
\end{prop}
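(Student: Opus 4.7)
My plan is to inject the set of pairs $(\mathbf{j}, \ell)$, with $\mathbf{j}=(j_1,\ldots,j_{k-3})$ a surprising index in $\mb{P}^{k-2}$ and $\ell\in\{0,1,\ldots,k-2\}$, into the set of surprising indices in $\mb{P}^{k-1}$ via the map $(\mathbf{j},\ell)\mapsto(\ell, j_1, j_2,\ldots, j_{k-3})$. For a fixed $\mathbf{j}$ the $k-1$ images are pairwise distinct since their leading coordinates range over $0,1,\ldots,k-2$. Moreover the tail of any image recovers $\mathbf{j}$, so the images associated to different starting indices $\mathbf{j}\ne\mathbf{j}'$ are disjoint. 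This yields $S_k\ge(k-1)S_{k-1}$ as soon as each image is verified to be a surprising index in $\mb{P}^{k-1}$.

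To realize the image $(\ell,j_1,\ldots,j_{k-3})$ when $\ell\ge 1$, take a $(k-1)$-planar space $f$ in $\mb{P}^{k-2}$ that witnesses the surprise of $\mathbf{j}$ at a point $m$, so that $\mu(f,g)$ fails to be a polynomial in $q$ for some $g\ge f'$. Embed $\mb{P}^{k-2}$ as a hyperplane $H\subset\mb{P}^{k-1}$, transplant $f$ into $H$, and declare $H$ together with $\ell-1$ additional generically chosen full hyperplanes through $m$ as the full hyperplanes of a new $k$-planar space $f_\ell$. Each $(k-2-r)$-dimensional subspace of $f$ retains its dimension in $\mb{P}^{k-1}$ but now has codimension $r+1$, so the entries of $\mathbf{j}$ shift one position to the right in the index. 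The surprise carries over: the extra hyperplanes contribute only polynomial factors to $\mu$ by a direct generalization of Proposition \ref{onetwoline} (adjoining a full hyperplane through a point for which a containing hyperplane is already prescribed is a polynomial operation), so the ambiguity of the intersection structure inside $H$ survives.

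The case $\ell=0$ is the delicate one, because the planar-space axioms automatically promote a hyperplane to a full hyperplane of $f_0$ as soon as $m$ belongs to $k$ non-coplanar points. One handles this by augmenting $f$ with auxiliary points placed outside $H$ so that every $(k-1)$-subset through $m$ remains contained in some lower-dimensional subspace of $f_0$, while not interacting with the subspaces of $f$ that witness the surprise at $m$. The main obstacle of the proof is precisely this $\ell=0$ construction, together with the accompanying genericity claim needed for $\ell\ge 2$ (ensuring the extra hyperplanes create no unintended full intermediate-codimension subspaces through $m$). Both reduce to general-position arguments on the appropriate Grassmannians of subspaces through $m$ in $\mb{P}^{k-1}$, and hold provided $q$ is sufficiently large; this suffices because the surprising-ness of an index is a combinatorial property of the index itself rather than of a specific $q$.
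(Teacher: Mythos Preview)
Your overall strategy---prepending a new ``number of hyperplanes'' coordinate to a surprising index from $\mb{P}^{k-2}$---is exactly the paper's approach. The difference is in the range you allow for the prepended coordinate: you take $\ell\in\{0,1,\ldots,k-2\}$, whereas the paper (implicitly) uses $i_0\in\{1,\ldots,k-1\}$. Both are sets of size $k-1$, and your construction for $\ell\ge 1$ is essentially the paper's argument spelled out in more detail. The problem is that your $\ell=0$ case does not work, and this is a genuine gap.

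Concretely, take $k=5$ and the unique surprising index $\mathbf{j}=(3,0)$ in $\mb{P}^3$. You would need a $5$-planar space with a point $m$ of index $(0,3,0)$: three full $2$-planes through $m$, no full lines, and \emph{no} full hyperplanes. But each full $2$-plane $P_i$ through $m$ has at least four points, and as soon as there is any point $r$ of the configuration outside $P_i$ (which there must be, since $P_j\ne P_i$), the subspace spanned by $P_i$ and $r$ is a hyperplane containing at least five points---hence full---and it passes through $m$. So $i_1\ge 1$ is forced, and the index $(0,3,0)$ is never realized. Your proposed fix of ``augmenting $f$ with auxiliary points outside $H$'' cannot help: the obstruction is already present inside the transplanted configuration, and adding more points only creates more full hyperplanes through $m$, not fewer. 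An impossible index is vacuously non-surprising, so your injection fails at $\ell=0$.

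The repair is immediate: drop $\ell=0$ and include $\ell=k-1$ instead. Your own construction for $\ell\ge 1$ (embed the witness in a hyperplane $H$ and adjoin $\ell-1$ further generic full hyperplanes through $m$) applies verbatim to $\ell=k-1$, and the genericity argument you already sketched handles it. With that change your proof aligns with the paper's, which simply asserts that for any $i_0\ge 1$ the lower-dimensional objects can all be arranged to lie in a single hyperplane of $f$, reducing the determination of $\mu(f,g)$ to the ambiguous $(k-1)$-planar situation.
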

\begin{proof}
Pick any surprising index $(i_1, \ldots, i_{k-1})$ in $(k-2)$-dimensional space. Then for any $i_0 \ge 1$, we claim $(i_0, i_1, \ldots, i_{k-1})$ is a surprising index in $(k-1)$-dimensional space. Let $f$ be a $k$-planar space which has a point of index $(i_0, i_1, \ldots, i_k)$ such that $i_0 \ge 1$. It is possible that $a$-dimensional objects for $0 \le a \le k-3$ lie in a single hyperplane of $f$. In this case, it is impossible to determine $\mu(f,g)$ since we reduce to studying objects in a $(k-1)$-planar space.
\end{proof}

\begin{cor}\label{up_low_bds}
	Let $S_k$ be the number of surprising indices in $\mb{P}^{k-1}(\Fq)$. Then
	$$
	\frac{(k-1)!}{6} \le S_k \le \frac{k!}{6}.
	$$
\end{cor}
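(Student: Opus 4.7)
The plan is to prove the two bounds separately, with both following essentially as combinatorial consequences of the two propositions just established.

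For the upper bound $S_k \le k!/6$, observe that a surprising index $(i_1,\ldots,i_{k-2})$ must fail the hyperfiguration class-(a) condition $i_j > k-j$, so every coordinate satisfies $0 \le i_j \le k-j$. Without any further restriction this already limits the candidate indices to
\[
k \cdot (k-1) \cdots 3 \;=\; \frac{k!}{2}
\]
tuples. Proposition \ref{onetwoline} now eliminates every candidate with $i_{k-2}\in\{1,2\}$, since for such an index any accompanying $\mu(f,g)$ is a polynomial in $q$. This forces $i_{k-2}=0$, cutting the number of allowed values for the last coordinate from $3$ down to $1$, and hence $S_k \le k!/6$.

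For the lower bound, I would induct on $k$, taking $k=4$ as the base case. As noted earlier in Section \ref{gen_sec}, the unique surprising index in the planar case is $(3,0)$, so $S_4 = 1 = 3!/6$. For the inductive step, assume $S_{k-1} \ge (k-2)!/6$. Then Proposition \ref{lb_surprising_index} gives
\[
S_k \;\ge\; (k-1)\, S_{k-1} \;\ge\; (k-1)\cdot\frac{(k-2)!}{6} \;=\; \frac{(k-1)!}{6},
\]
closing the induction.

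I do not expect a real obstacle here: both inequalities are immediate from results already in hand, and the argument amounts to recognizing that Proposition \ref{onetwoline} controls exactly the last coordinate while Proposition \ref{lb_surprising_index} controls exactly the first. The only point requiring minor care is verifying that the candidate count for the upper bound matches the coordinate ranges dictated by the definition of $k$-hyperfiguration, and that the base case $S_4 = 1$ is correctly identified with the surprising index $(3,0)$ recorded in Lemma \ref{mainlemma}.
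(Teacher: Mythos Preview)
Your proposal is correct and matches the paper's intended argument. The paper does not actually write out a proof of this corollary---it is left as an immediate consequence of Propositions~\ref{onetwoline} and~\ref{lb_surprising_index} together with the base case $S_4=1$, and your derivation of the upper bound from the coordinate ranges $0\le i_j\le k-j$ plus the restriction $i_{k-2}=0$ is exactly the reasoning implicit in the paper's remark that there are $k!/2$ potential indices.
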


There are a total of $\frac{k!}{2}$ potential indices for a point in a $k$-planar space (some of these indices will be impossible). Applying Corollary \ref{up_low_bds}, we find that the ratio of surprising indices to potential indices is between $\frac{1}{3k}$ and $\frac{1}{3}$. More work needs to be done to determine whether the ratio of surprising indices to potential indices will go to 0 or a non-zero constant.

\begin{example}
	Corollary \ref{up_low_bds} gives $1 \le S_4 \le 4$. When found that there is exactly one surprising index when $k=4$. 
	 Thus the ratio of surprising indices to potential indices is $\frac{1}{12}$. Recall that in Lemma \ref{mainlemma}, the cases (0,2) and (2,2) were impossible.
\end{example}

\begin{rmk}
	The number of surprising indices grows very quickly. Thus the definition of $k$-hyperfiguration given in this section likely needs refinement. Observe that in Lemma \ref{mainlemma}, we omit an index $I$ if there exists a planar space $f$ with index $I$ and a planar space $g \ge f'$ such that $\mu(f,g)$ cannot be explicitly given as a polynomial in $q$. However, it is possible that for some planar spaces $f$ with index $I$ and every $g \ge f'$, we can conclude that $\mu(f,g)$ is a polynomial in $q$. A refinement of Lemma \ref{mainlemma} could be made that checks more than just the index of the point to be removed. The number of surprising indices would remain the same, but the number of $k$-hyperfigurations would likely decrease. 
\end{rmk}

\begin{defn}
	A \emph{strong realization} of a $k$-hyperfiguration $h$ is an injective mapping $\sigma: \mc{P} \rightarrow \mb{P}^{k-1}(\F_q)$ such that for all subsets $Q \subseteq \mc{P}$ and all $1 \le a \le k-2$, $Q$ is contained in an $a$-dimensional subset of $h$ if and only if $\sigma(Q)$ is contained in an $a$-dimensional subset of $\mb{P}^{k-1}(\Fq)$.
\end{defn}

\begin{thm}
	There exists polynomials $p(q)$ and $p_h(q)$ for which
	$$
	C_{n,k}(q) = p(q) + \sum_{h} p_h(q) A_h(k,q)
	$$
	where the summation is over all isomorphism classes of $k$-hyperfigurations on at most $n$ points.
\end{thm}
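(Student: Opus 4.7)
The plan is to mirror the proof of Theorem \ref{p3count} one dimension at a time, with the generalized Lemma \ref{mainlemma} as the workhorse. First I would define a partial order on $k$-planar spaces on $n$ points in the obvious way: $g \ge f$ if every element of each $\mc{H}_j$ in $f$ is contained in some element of the corresponding $\mc{H}_j$ in $g$. Then I would define weak realizations $B_f(k,q)$ exactly as in Section \ref{gen_sec}: an injective map $\mc{P} \to \mb{P}^{k-1}(\F_q)$ which sends every element of $\mc{H}_j$ into some $(k-1-j)$-dimensional subspace of $\mb{P}^{k-1}(\F_q)$, but which may impose extra coincidences. This immediately gives the identity
$$
B_f(k,q) = \sum_{g \ge f} A_g(k,q).
$$

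The key step is a generalization of Lemma \ref{mainlemma}: if $f$ has a point $m$ whose index $(i_1, \ldots, i_{k-2})$ satisfies $i_j \le k-j$ for all $j$ and is not a surprising index, then
$$
B_f(k,q) = \sum_{g \ge f'} \mu(f,g)\, A_g(k,q),
$$
with $\mu(f,g) \in \Z[q]$ and $f'$ the $k$-planar space obtained by deleting $m$. The recipe for $\mu(f,g)$ is the same as before: fix a strong realization $P_g$ of $g$, extend each subset $H'$ of $f'$ corresponding to an object through $m$ to its ambient object in $g$, and count the points of $\mb{P}^{k-1}(\F_q) \setminus P_g$ lying in the appropriate intersection. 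Proposition \ref{onetwoline} already dispatches every index with $0 < i_{k-2} \le 2$, and the argument for index $(0,\ldots,0)$ gives simply $\mu(f,g) = |\mb{P}^{k-1}(\F_q)| - (n-1)$. For the remaining non-surprising indices, one argues by induction on the codimension: if $m$ lies on an $a$-dimensional object $X$, every higher-dimensional object through $m$ must contain $X$, so the intersection of all ambient objects in $g$ corresponding to objects of $f$ through $m$ is forced (up to degenerate identifications already accounted for by the surprising indices) to be a linear subspace of $\mb{P}^{k-1}(\F_q)$ of predictable dimension, and the count of remaining points is polynomial in $q$. This is exactly what happens case-by-case in Lemma \ref{mainlemma}, and the whole point of singling out the surprising indices is that they are precisely the obstructions to carrying this argument out.

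With the generalized lemma in hand, the proof of the theorem is a routine induction on the number of points $m$. The base case is the unique $k$-planar space on a single point, for which $A_f(k,q) = B_f(k,q) = q^{k-1} + q^{k-2} + \cdots + 1$. For the inductive step, fix a $k$-planar space $f$ on $m+1$ points. If $f$ is a $k$-hyperfiguration, there is nothing to do. Otherwise, by definition some point has an index to which the generalized lemma applies; let $f'$ be $f$ with that point removed. Then
$$
A_f(k,q) = B_f(k,q) - \sum_{g > f} A_g(k,q) = \sum_{g \ge f'} \mu(f,g)\, A_g(k,q) - \sum_{g > f} A_g(k,q),
$$
and each $A_g(k,q)$ appearing on the right is by induction a $\Z[q]$-linear combination of $A_h(k,q)$ for $k$-hyperfigurations $h$ on at most $m+1$ points. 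Isomorphic $k$-planar spaces have equal $A$-values, so after collecting terms the summation can be taken over isomorphism classes of $k$-hyperfigurations on at most $n$ points. Summing over all planar spaces to obtain $C_{n,k}(q)$ (equivalently, applying inclusion--exclusion to $n$-tuples of points in $\mb{P}^{k-1}(\F_q)$) yields the desired formula.

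The main obstacle is the case analysis in the generalized lemma. In dimension three there were ten potential indices and Lemma \ref{mainlemma} had to handle each one individually, and Corollary \ref{up_low_bds} shows the number of potential indices grows like $k!/2$. What makes the argument tractable is that, outside of the surprising indices, every incidence one encounters when extending $f'$ to $g$ reduces to the situation of Proposition \ref{onetwoline} or to a direct intersection-of-linear-subspaces computation in $\mb{P}^{k-1}(\F_q)$; the surprising indices were defined precisely to excise the cases where different generic intersection patterns cannot be distinguished from the combinatorial data of $g$. Making this precise in full generality, rather than verifying it index by index, is the real work, and is the reason the theorem is stated abstractly rather than with an explicit list of $\mu(f,g)$.
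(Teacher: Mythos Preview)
Your proposal is correct and follows essentially the same approach as the paper: define weak realizations and the partial order on $k$-planar spaces, invoke a generalized version of Lemma \ref{mainlemma} for points whose index is neither large in some coordinate nor surprising, and then run the same induction on the number of points as in the proof of Theorem \ref{p3count}. The paper's own proof is in fact even terser than yours---it simply observes that the definition of $k$-hyperfiguration was chosen precisely so that the generalized lemma holds tautologically for any non-hyperfiguration, and then points back to the inductive argument of Theorem \ref{p3count}.
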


\begin{proof}
	By definition of a $k$-hyperfiguration, we can show that if $f$ is not a $k$-hyperfiguration, then 
	$$
	B_f(k,q) =\sum_{g \ge f'}\mu(f,g) A_{g}(k,q) 
	$$
	for some polynomials $\mu(f,g)$. The proof is an inductive argument similar to that in the proof of Theorem \ref{p3count}.
\end{proof}


Of course, this is only an existence theorem -- for each $k$, one would need to understand how to compute $\mu(f,g)$ for every planar space $f$ and every $g \ge f'$. This becomes infeasible as $k$ grows as there are $\frac{k!}{2}$ potential indices to consider.

\section{Acknowledgments} 
This work was supported by the NSF grant DMS 1802281. The author thanks Nathan Kaplan for many invaluable conversations and Max Weinreich for helpful comments. The author also thanks the anonymous referee for comments and suggestions that have improved this paper. Parts of this work appeared in the author's Ph.D. thesis.

\bibliographystyle{habbrv}
\footnotesize{\bibliography{../../../../Bibliography/bib_all}}

\end{document}